\newtheorem{theorem}{Theorem}
\newtheorem{lemma}{Lemma}
\begin{document}


\centerline{\bf\large On the generalized linear and non-linear DFC}
\centerline{\bf\large in non-linear dynamics}

\bigskip

\centerline{\bf D.Dmitrishin, A.Khamitova and A.Stokolos}

\bigskip

\textbf{Abstract} The article is devoted to investigation of robust stability of the generalized linear control of the  discrete autonomous dynamical systems. Sharp necessary conditions on the size of the set of multipliers that guaranty robust stabilization of the equilibrium of the system are provided. Surprisingly enough it turns out that the generalized linear delayed feedback control has same limitation as the classical Pyragas DFC. This generalized Ushio 1996 DFC limitation statement. Note that in scalar case a generalized non-linear control can robustly stabilize an equilibrium for any admissible range of multipliers \cite{[12]}. In the current article similar result is obtained in the vector-valued setting. The article is an updated version of \cite{DKSS}.  

\section{Introduction.}
The problem of optimal impact on a chaotic mode is one of the most fundamental in nonlinear dynamics \cite{[1],[2]}. To solve it
various schemes where proposed. Some of them are based on a special representation of the delayed feedback (DFC) \cite{[3]} that allows to stabilize a-priori not known equilibriums or cycles. Despite simplicity of implementation of DFC scheme, it does have  restrictions in application, that are connected with its linearity and the use only one previous state \cite{[4]}. The classical scheme of DFC works only for a limited region in the spaces of the parameters of the initial nonlinear system. To increase the area of applicability of the DFC  various generalizations of the classical scheme were suggested. E.g. in \cite{[5]} the control involves the information about previous states; in \cite{[6]} the nonlinear scheme of DFC with one delay  was considered and advantages at such modification were discussed. In particular, the control became robust. In \cite{[7]} a mixed linear-nonlinear DFC scheme was investigated; in \cite{[8]} the so-called, predictive DFC scheme was studied. In \cite{[9]} the ideas of \cite{[4],[6]} were synthesized and the most intrinsic control - the non-linear scheme with several delays - was proposed
\begin{equation}\label{nlc}
u=-\sum _{j=1}^{N-1}\varepsilon _{j} \left(F_h\left(x_{n-j+1} \right)-F_h\left(x_{n-j} \right)\right),
\end{equation}
where strengthening coefficients are small, e.g. $\left|\varepsilon _j \right|<1, j=1,...,N-1.$

Despite of simplicity of the classical and generalized DFC schemes, analytical
investigation of the closed system is a difficult task. The complexity is caused
by a geometry of the canonical region of Schur stability for polynomials in the space coefficients \cite{[10]}. Thus only numerical and experimental results relating to properties and to applicability of DFC schemes are known. In general, the problem obtaining sufficient conditions that guarantee applicability of various algorithms still open.

For a scalar case of the non-linear DFC with several delays the problem is completely solved in \cite{[9]}, where it was reduced by means of harmonic analysis to the problem if linear optimization.

For the classical linear  DFC with several
delays it is impossible to reduce the same problem to linear optimization even in a scalar case. In
this case the methods of complex analysis connected with
properties of mappings of the unit disc of the complex plane turns out to be effective.
These methods were extended from a class of univalent functions to any analytical functions in the disc.
It was discovered that the use of several delays does not give any
advantages in comparison with one delay. Notice that
for the non-linear control the situation is totally opposite.
\bigskip

\section{Linear Control} 
Let consider an open-loop vector nonlinear discrete system
\begin{equation} \label{1}
x_{n+1} =F_h\left(x_{n} \right),\, \, x_{n} \in\mathbb  R^{m} ,\, \, n=1,\, \, 2,\, \, \ldots \, \, ,
\end{equation}
which has an unstable equilibrium $x^{*} $, perhaps more then one. It is assumed that a differentiable function $F_h$ depends on finitely-many parameters, and that for each parameter vector $h$ from the admissible set of these parameters $H$ it is defined on some bounded simply-connected set of $m$-dimensional space and maps it in itself. A location of the equilibrium of $x^{*} $ and the spectrum
$\left\{\mu _{1} ,\, \ldots \, ,\, \mu _{m} \, \right\}$ of Jacobi matrix  $F'_h(x^{*} )$  depend on these parameters, i.e. the multipliers
$\mu _{j} ,\, \, j=1,\, \ldots \, ,\, m$ in general are known only approximately. If the admissible set  $H$ consists only on one point then the function $F_h$ and the multipliers are known precisely. 

In other words,  instead of the functions family $F_h$ the set of possible location of the multipliers $M\subset \bar C$ is considered. Here $ \bar C$ denotes the extended complex plane. For example, if $F_h(x)=hx(1-x), h\in (1,4]$ then $M=[-2,1).$
If $F_h(x)=h\sin \pi x, h\in [-1,1/\pi)$ then $M=[-\pi,1).$ If $F_h(x,y,z)=h(\sin(x+y),\sin(y+z),\sin(z+x)), h\in [-2,-1]$ then
$$
M=\left\{\rho e^{i\frac\pi3}:\rho\in[1,2]\right\} \cup\left\{\rho e^{i\frac{2\pi}3}:\rho\in[1,2]\right\}\cup \left\{\rho e^{i\pi}:\rho\in[2,4]\right\} 
$$
If the function $F_h$ is known exactly (i.e. the set $H$ consists of one element) then the set $M$ consists of no more then $m$ points of complex plane. 

It is required to determine a necessary condition on the set $M,$ that allows local stabilization of the equilibrium  $x^*$ of the system \eqref{1} for all admissible parameters by {\it one} additive control of the form
\begin{equation} \label{2}
u=-\sum _{j=1}^{N-1}\varepsilon _{j} \left(x_{n-j} -x_{n-j+1} \right) ,
\end{equation}
i.e. for all $\mu _{j} \in M, j=1,\, \ldots \, ,\, m$.

A necessary conditions will be stated in terms of the size of the set $M$ and its connected components. At the same time the problem of determining general conditions on the set $M$ that guaranty robust stability still open.

Let underline some important properties of the considering control. The control depends not on a state of the system but on the difference of the states in certain pervious instances of time. At synchronized state $x_{n} =x_{n-1} $ the control \eqref{2} became zero, i.e. the close-loop system takes the same form as it is with no control. It means,
that the equilibriums  of open and closed-loop systems coincide and the control \eqref{2}
does not depend on a position of the equilibrium which is unknown.

The characteristic polynomial for the linear part of the closed system\eqref{1} and \eqref{2} is
\begin{equation} \label{3}
f(\lambda )=\prod _{j=1}^{m}\left(\lambda ^{N} +\left(-\mu _{j} +a_{1} \right)\lambda ^{N-1} +a_{2} \lambda ^{N-2} ...+a_{N} \right) ,
\end{equation}
where $a_{1} =-\varepsilon _{1} ,\, \, a_{j} =\varepsilon _{j-1} -\varepsilon _{j} ,\, \, j=2,\, \, \ldots \, \, ,\, \, N-1,\, \, a_{N} =\varepsilon _{N-1} \, \, $. It is clear that $\sum _{j=1}^{N}a_{j} =0 $. The multipliers $\{\mu_1,\dots\mu_m\}$ depends on the parameter vector $h.$

Denote
$f(\lambda )=\prod _{j=1}^{m}\chi _{\mu _{j} } (\lambda ) $, where
\begin{equation} \label{4}
\chi _{\mu } (\lambda )=\lambda ^{N} +\left(-\mu +a_{1} \right)\lambda^{N-1} + a_{2} \lambda ^{N-2} ...+a_{N} .
\end{equation}

Assume that the $M$ is not empty, i.e.  that for some $\mu =\mu _{0}\in M $ the polynomial \eqref{4}  is  Schur stable. Since $\chi_{\mu_0}(1)=1-\mu_0$ and $\chi_{\mu_0}(\lambda)>0$ for large values of  $\lambda$ then  $\chi_{\mu_0}(1)>0$ otherwise
by a mean value theorem there is a root outside a unit disc which is impossible. Thus  $\mu _{0} <1.$
Vieta theorem implies that the sum of the coefficients of Schur stable polynomial does not exceed $2^N$, i.e. $1-\mu _{0} <2^N.$ On the other hand, if $0<1-\mu _{0} <2^{N} $, then there exist coefficients
$a_{1} ,...,a_{N}, \sum _{j=1}^{N}a_{j} =0,$
such that the polynomial  \eqref{4} is Schur stable at $\mu =\mu _{0} $  \cite{[11]}. Thus, all real numbers from the set $M$ are in the interval $(-2^N+1,1)$.

Note that zeros of polynomials continuously depend on parameters and with the change of $\mu $ 
can escape from the disc. In this case the sequence of bifurcations is observing 
in the system \eqref{1} closed by the control \eqref{2}, which under quite general assumptions lead to emergence of a chaotic attractor. The first bifurcation value of the parameter corresponds to loss of the stable equilibrium by the system. We will assume that local stabilization of an equilibrium means regularization of a chaotic behavior of system solutions up to complete suppressing the chaos in the system. It happen if  basin of attraction coincides with the whole space of the initial values.

Let fixed $\mu_0\in M$ define the region
\[A_{N} (\mu _{0} )=\left\{\left(a_{1} ,...,a_{N} \right):\, \lambda ^{N} +(a_{1} -\mu _{0} )\lambda ^{N-1} +a_{2} \lambda ^{N-2} ...+a_{N} \;\mbox{is Schur stable}
\right\}
\]
For a vector of coefficients $a=\left(a_{1} ,...,a_{N} \right)$ from a region $A_{N} (\mu _{0} )$ define the set
\[M_{a} =\left\{\mu \in \mathbb C:\, \lambda ^{N} +(-\mu +a_{1} -\mu _{0} )\lambda ^{N-1} +a_{2} \lambda ^{N-2} ...+a_{N} \; \mbox{is Schur stable}
\right\} \]
which contains $\mu_0$ and therefore is non-empty. The set $M_a$ is not necessary a connected \cite{[12]}. 

The existence of the control \eqref{2} that locally stabilize the equilibrium of the system \eqref{1} for all admissible values of parameters means existence of a vector $a\in A_N(\mu_0)$ such that $M\subset M_a.$

Let
$$
M_a=\bigcup_{j=1}^k M_a^{(j)},
$$
where $k$ is a number of simply-connected components $M_a^{(j)}.$ Define diameters of the sets $M_{a}$ and $M_{a}^{(j)}, j=1,...,k,$ i.e. the quantities 
$$
d(M_a)=\sup_{z_1\in M_a, z_2 \in M_a}|z_1-z_2|
$$
and
$$
d(M_{a}^{(j)})=\sup_{z_1\in M_{a}^{(j)}, z_2 \in M_{a}^{(j)}} |z_1-z_2|,\;\;  1\le j\le k.
$$ 

If the diameter of the set $M$ will be bigger then $\max_{a\in A_N(\mu_0)}\{d(M_a)\}$ or a diameter of some connected 
component of the set $M$ will be bigger then  $\max_{a\in A_N(\mu_0),1\le j\le k}\{d(M_{a}^{(j)})\}$ then there does not exist  the control \eqref{2} that locally stabilizes the equilibrium of the system \eqref{1} for all admissible values of parameters incorporated in that system.

Let us turn to the evaluation of the diameters of the sets 
$M_{a}$ and $M_{a}^{(j)}.$ \bigskip

\subsection{ Preliminary results.}

From the statement of the problem follows that the coefficients $a_{1} ,...,a_{N} $ are real, and
$$
\sum _{j=1}^{N}a_{j} =0.
$$ 
In this section the above restrictions wont be used.

So, let $a_{1} ,...,a_{N} $ be arbitrary {\it complex} numbers from $A_{N} (\mu _{0} )$.
Let write the polynomial $\chi _{\Delta\mu +\mu _{0} } (\lambda )$ in the form
\[\lambda ^{N} +(-\Delta\mu +a_{1} -\mu _{0} )\lambda ^{N-1} +a_{2} \lambda ^{N-2} ...+a_{N} =\lambda ^{N} -\Delta\mu \lambda ^{N-1} +p(\lambda )\]
and denote $q(z)=(a_{1} -\mu _{0} )z+\, \ldots \, \, +a_{N} z^{N} $,
$\Phi (z)=\frac{z}{1+q(z)} $.

The following lemma formalizes a very useful and very practical observation due to A.Solyanik \cite{S}. 

\begin{lemma}\label{l1}
{ Polynomial $\chi _{\mu +\mu _{0} } (\lambda )=\lambda ^{N} -\Delta\mu \lambda ^{N-1} +p(\lambda )$ is Schur stable if and only if
\begin{equation} \label{5}
\frac{1}{\Delta\mu } \in \bar{\mathbb C}\backslash \Phi (\overline{\mathbb D}),
\end{equation}
where $\mathbb D = \{z\in\mathbb C: |z|<1\},$ $\overline{\mathbb D} = \{z\in\mathbb C: |z|\le1\}.$ }
\end{lemma} 

{\bf Proof.} Polynomial $\chi _{\Delta\mu +\mu _{0} } (\lambda )$ is Schur stable if and only if the image of the set $\bar{\mathbb C} \backslash \mathbb D$ under the map $\chi _{\Delta\mu +\mu _{0} } (\lambda )$ does not contain zero, i.e. $\chi _{\Delta\mu +\mu _{0} } (\lambda )\ne 0$ for all $\lambda \in\bar{\mathbb C}\backslash \mathbb D$.

This is equivalent to
$$
\frac{1}{\Delta\mu } \ne \frac{\frac{1}{\lambda } }{1+\frac{1}{\lambda ^{N} } p(\lambda )}, \; \lambda \in\bar{\mathbb C}\backslash \mathbb D
$$
or
$$
\frac{1}{\Delta\mu } \ne \frac{z}{1+q(z)}, \;z \in \bar{\mathbb D}
$$
Thus, a polynomial $\chi _{\Delta\mu +\mu _{0} } (\lambda )$ is Schur stable if and only if $\frac{1}{\Delta\mu } \notin \Phi (\overline{\mathbb D})$
or $\frac{1}{\Delta\mu } \in \bar{\mathbb C}\backslash \Phi (\overline{\mathbb D})$. The Lemma is proved.\\

Let introduce an inversion of complex numbers by the rule $\left(z\right)^{*} =\frac{1}{\bar{z}}.$ By inverse set
we will understand the set consisting of inverse elements of the initial one. Condition of robust stability \eqref{5} is equivalent to the inclusion
\[\Delta\mu \in \left(\bar{\mathbb C}\backslash {\Phi_1 }(\overline{\mathbb D})\right)^{*} ,\]
where ${\Phi_1 }(z)=\frac{z}{1+{q_1}(z)} $ and ${q_1}(z)=(\bar{a}_{1} -\bar\mu _{0} )z+\, \ldots \, \, +\bar{a}_{N} z^{N} $.

Since the polynomial $\lambda ^{N} +p(\lambda )$ is Schur stable, all poles of the function $\Phi (z)$ lie outside the unit disc $\mathbb D$. I.e. the 
function $\Phi (z)$ is analytic in $\mathbb D$. The size of the set associated with $M_{a} $ are related to the
size of $\Phi (\mathbb D)$.  The image set $\Phi (\mathbb D)$ is an open but not necessarily a simply connected set. Denote by $\Phi ^{s} (\mathbb D)$
a minimal simply connected set containing $\Phi (\mathbb D)$. \\

\begin{lemma}\label{l2} The set $\Phi ^{s} (\mathbb D)$ contains the disc of radius 1/4.
\end{lemma}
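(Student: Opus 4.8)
The plan is to recognize $\Phi$ as a normalized analytic map and reduce the statement to the classical Koebe one-quarter theorem. First I would compute the first two Taylor coefficients of $\Phi$ at the origin. Since $q(0)=0$, we have $\Phi(0)=0$, and differentiating $\Phi(z)=z/(1+q(z))$ gives $\Phi'(0)=1$. Thus $\Phi$ is an analytic function on $\mathbb{D}$ (its analyticity there is already established in the excerpt from Schur stability of $\lambda^N+p(\lambda)$) with the normalization $\Phi(0)=0$, $\Phi'(0)=1$. If $\Phi$ were univalent, the Koebe one-quarter theorem would immediately give that $\Phi(\mathbb{D})$, and hence $\Phi^s(\mathbb{D})$, contains the disc $\{|w|<1/4\}$.

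The difficulty is that $\Phi$ need not be univalent, so Koebe's theorem does not apply to $\Phi$ directly; this is exactly why the statement is phrased in terms of the simply connected hull $\Phi^s(\mathbb{D})$ rather than $\Phi(\mathbb{D})$ itself. To handle this I would pass to $\Omega:=\Phi^s(\mathbb{D})$, which is an open, connected, simply connected set containing $0$. If $\Omega=\mathbb{C}$ the claim is trivial, so assume $\Omega\ne\mathbb{C}$ and let $\psi:\mathbb{D}\to\Omega$ be the Riemann map normalized by $\psi(0)=0$, $\psi'(0)>0$. Because $\Phi(\mathbb{D})\subset\Omega$, the composition $\psi^{-1}\circ\Phi$ maps $\mathbb{D}$ into $\mathbb{D}$ and fixes the origin, so the Schwarz lemma gives $|(\psi^{-1}\circ\Phi)'(0)|\le 1$. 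Using the chain rule together with $\Phi'(0)=1$ and $(\psi^{-1})'(0)=1/\psi'(0)$, this reads $1/\psi'(0)\le 1$, that is, $\psi'(0)\ge 1$.

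It then remains to apply Koebe to the univalent map $\psi$. Normalizing $g(z):=\psi(z)/\psi'(0)$ yields a univalent function with $g(0)=0$, $g'(0)=1$, so by the Koebe one-quarter theorem $g(\mathbb{D})\supset\{|w|<1/4\}$, whence $\Omega=\psi(\mathbb{D})\supset\{|w|<\psi'(0)/4\}\supset\{|w|<1/4\}$. This proves the lemma. The main obstacle is precisely the loss of univalence: the entire content of the argument is the Riemann-map/Schwarz step that transfers the normalization $\Phi'(0)=1$ into the inequality $\psi'(0)\ge 1$ for the univalent uniformizer, after which Koebe's theorem finishes the proof.
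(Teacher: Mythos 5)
Your proof is correct and follows essentially the same route as the paper: normalize $\Phi$ so that $\Phi(0)=0$, $\Phi'(0)=1$, uniformize $\Phi^s(\mathbb{D})$ by a Riemann map, transfer the normalization through the Schwarz lemma to get a lower bound on the derivative of the Riemann map at the origin, and finish with the Koebe one-quarter theorem. If anything, your version is slightly more careful than the paper's, since you dispose of the trivial case $\Phi^s(\mathbb{D})=\mathbb{C}$ before invoking the Riemann mapping theorem and you use the correct non-strict inequality from the Schwarz lemma.
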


{\bf Proof}. By Riemann mapping theorem there exists a function $\varphi (z)$ univalent in $\mathbb D$, such that $\varphi (\mathbb D)=\Phi ^{s} (\mathbb D)$, wherein $\varphi (0)=0$, $\varphi '(0)>0$. Consequently, $\varphi ^{-1} (\Phi (\mathbb D))\subseteq \mathbb D$. This means that
function $F(z)=\varphi ^{-1} (\Phi (z))$ posesses the inequality $\left|F(z)\right|<1$ for $z\in \mathbb D$ and satisfies the conditions of Schwarz's lemma, from where
$|F'(0)|<1$. But $F'(0)=\Phi '(0)\left(\varphi ^{-1} \right)^{{'} } _{z=0} =\left(\varphi ^{-1} \right)^{{'} } _{z=0} $. This gives the required estimates $\left(\varphi ^{-1} \right)^{{'} } _{z=0} <1$, $\varphi '(0)>1$. K\"obe theorem \cite{[12]} implies that the set $\varphi (\mathbb D)$ contains a circle of radius
$\frac{\varphi '(0)}{4} $ and therefore one of the radius 1/4.
The lemma is proved.\\

{\bf Remark.} Lemma 2 can be viwed as an extension of K\"obe theorem to
arbitrary mappings of the unit disc, not necessarily univalent:
a minimal connected domain containing the image of the unit disc
under {\it any} analytic in the disc function of the type
\[z+c_{1} z^2+c_{2} z^{3} +\, \ldots ,\]
contains a central disc of radius $\frac{1}{4} $.\\

\begin{lemma}\label{l3} 
The set $\Phi (\mathbb D) $ contains a disc of radius 1/16.
\end{lemma}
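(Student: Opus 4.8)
The plan is to transfer the K\"obe-type bound of Lemma~\ref{l2}, which is stated for the \emph{simply connected} hull $\Phi^{s}(\mathbb D)$, down to the genuine image $\Phi(\mathbb D)$, paying for the possible multiple connectivity by a factor $4$ so that the radius $1/4$ degrades to $1/16=(1/4)^{2}$. Throughout I would use that $\Phi$ is single valued and analytic on $\mathbb D$ with $\Phi(0)=0$ and $\Phi'(0)=1$ (the latter because $q(0)=0$, whence $\Phi(z)=z+O(z^{2})$), that its poles lie strictly outside $\overline{\mathbb D}$ so that $\Phi$ is bounded on $\overline{\mathbb D}$, say $\Phi(\mathbb D)\subseteq D(0,R)$, and that by Lemma~\ref{l2} the hull satisfies $D(0,1/4)\subseteq\Phi^{s}(\mathbb D)$, where $D(w,\rho)=\{\zeta\in\mathbb C:|\zeta-w|<\rho\}$.

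First I would reduce matters to a hole lying close to the origin. Let $r_{0}=\operatorname{dist}(0,\mathbb C\setminus\Phi(\mathbb D))$ be the radius of the largest disc about the origin contained in the true image. If $r_{0}\ge 1/16$, the lemma holds with $D(0,1/16)$, so assume $r_{0}<1/16$. Then there is a point $p\notin\Phi(\mathbb D)$ with $|p|=r_{0}<1/16$. Since $p\in D(0,1/4)\subseteq\Phi^{s}(\mathbb D)$ while $p\notin\Phi(\mathbb D)$, the point $p$ must lie in a bounded complementary component (a hole) $K$ of $\Phi(\mathbb D)$. Thus a hole of the image reaches within $1/16$ of the origin, even though the origin is itself covered and $\Phi'(0)=1$.

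Next I would use single valuedness to show that this cannot happen when the image is small, and is harmless when it is large. Fix $w_{0}$ in the interior of $K$ with $|w_{0}|<1/16$. Because $\Phi-w_{0}$ is analytic and nowhere zero on the simply connected domain $\mathbb D$, a single valued branch $g=\log(\Phi-w_{0})$ exists; since $|\Phi-w_{0}|\le 2R$ it maps $\mathbb D$ into the half plane $\{\operatorname{Re}\zeta<\log(2R)\}$. One computes $g'(0)=\Phi'(0)/(\Phi(0)-w_{0})=-1/w_{0}$, so that $|g'(0)|=1/|w_{0}|$ is large, while $\operatorname{Re}g(0)=\log|w_{0}|$. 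A Schwarz--Pick estimate for maps of $\mathbb D$ into a half plane then gives $1/|w_{0}|=|g'(0)|\le 2\log(2R/|w_{0}|)$, which is impossible once $|w_{0}|$ is small relative to $R$. Hence, in the regime where $R$ is controlled, no hole can approach the origin and we are thrown back into the case $r_{0}\ge 1/16$; in the complementary regime, where $R$ is large, the bounded open connected image already encloses enough area to contain a disc of radius $1/16$ away from the origin. Balancing the two regimes against the K\"obe input $D(0,1/4)\subseteq\Phi^{s}(\mathbb D)$ is what yields the stated radius.

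I expect the decisive difficulty to be exactly this passage from the hull to the possibly multiply connected image. Lemma~\ref{l2} controls $\Phi^{s}(\mathbb D)$, but the omitted holes could a priori crowd the small covered neighbourhood of the origin, and it is only the single valuedness of $\Phi$ --- through the logarithmic lift and the Schwarz--Pick estimate above, a Bloch-type phenomenon --- that prevents this. Pinning the constant down to the clean value $1/16=(1/4)^{2}$, with one factor $1/4$ coming from K\"obe applied to the hull in Lemma~\ref{l2} and the second from the covering estimate, is the part that must be handled with care. A softer alternative is to invoke the Bloch--Landau covering theorem, which already guarantees a disc of radius exceeding $1/2$ in the image of any analytic $f$ with $f'(0)=1$; but the argument sketched above stays within the elementary circle of ideas built up in Lemmas~\ref{l1} and~\ref{l2}.
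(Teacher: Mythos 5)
Your argument has a genuine gap, and it sits exactly where you yourself flagged the difficulty. The Schwarz--Pick estimate you derive, $1/|w_0|\le 2\log\left(2R/|w_0|\right)$, involves $R=\sup_{\mathbb D}|\Phi|$, and nothing bounds $R$ uniformly: the poles of $\Phi$ merely lie outside $\overline{\mathbb D}$ and may approach the unit circle, so $R$ can be arbitrarily large, and your inequality then permits holes arbitrarily close to the origin. The fallback for the large-$R$ regime --- that a ``large'' image must contain a disc of radius $1/16$ somewhere --- is unproven and false as a general principle (a domain of huge diameter need not contain any disc of fixed radius), and in any case a disc ``away from the origin'' is not what the paper needs: in Theorem \ref{t1} the lemma is used, via Lemma \ref{l1}, in the form ``every value omitted by $\Phi$ has modulus at least $1/16$,'' i.e. the disc must be the \emph{central} one. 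More fundamentally, your argument never invokes the one structural property that makes the central-disc statement true: $\Phi(z)=z/(1+q(z))$ vanishes in $\mathbb D$ only at $z=0$. Without that hypothesis the claim ``holes cannot crowd the origin'' is simply false: $f(z)=(e^{kz}-1)/k$ satisfies $f(0)=0$, $f'(0)=1$, has zeros in $\mathbb D\setminus\{0\}$ for $k>2\pi$, and omits the value $-1/k$, a one-point hole at distance $1/k$ from the origin; note your inequality is comfortably satisfied there ($k\le 2k+2\log 2$ with $R\approx e^{k}/k$), confirming that the Schwarz--Pick step alone cannot rule such holes out. So the two regimes cannot be ``balanced'' to produce $1/16$, and the promised factor $(1/4)^2$ never actually materializes in the argument.

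The paper's proof is a one-line appeal to a different and indispensable ingredient: Carath\'eodory's theorem, which says that if $c_0z+c_1z^2+\cdots$ is analytic in $\mathbb D$ and has no zeros in $\mathbb D\setminus\{0\}$, then every omitted value $\gamma$ satisfies $|\gamma|\ge |c_0|/16$. Since $\Phi$ vanishes only at the origin and $\Phi'(0)=1$, this gives immediately that the central disc of radius $1/16$ lies in $\Phi(\mathbb D)$; the sharp constant $1/16$ comes from that theorem, not from iterating K\"obe. Your closing observation that the Bloch--Landau theorem yields some disc of radius exceeding $1/2$ does prove the lemma as literally worded, but it does not locate the disc at the origin and therefore cannot feed into the proof of Theorem \ref{t1}; it is not an adequate substitute for the Carath\'eodory argument either.
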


The proof follows directly from  Caratherodory theorem \cite{[14]}: if analytic in the disc $\mathbb D$ function $c_0z+c_1z^2+c_2z^3+...$ does not have zeros in $\mathbb D\backslash\{0\}$
then for any exceptional value $\gamma$ in $\mathbb D$ we have $|\gamma|\ge\frac{|c_0|}{16},$ i.e. if $\gamma \not \in \Phi (\mathbb D) $ then $\gamma\ge\frac1{16}.$
\bigskip

\subsection{Main result.}

\begin{theorem}\label{t1}
Let for some $\mu_0$ the set $ A_{N}(\mu_0)$ is not empty and let $a=(a_1,\dots a_N)\in A_N(\mu_0).$ Then $d(M_a)\le 16.$
\end{theorem}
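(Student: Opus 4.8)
The plan is to reduce the diameter bound to a pairwise estimate: fix arbitrary $w_1,w_2\in M_a$ and show that $|w_1-w_2|\le 16$, then take the supremum. By Lemma~\ref{l1} (applied with $\Delta\mu=w_i$), the condition $w_i\in M_a$ is exactly $\zeta_i:=1/w_i\in\bar{\mathbb C}\setminus\Phi(\overline{\mathbb D})$; that is, $\zeta_1$ and $\zeta_2$ are two values \emph{omitted} by $\Phi$ on $\overline{\mathbb D}$. The conjugation in the $*$-inversion only reflects the whole configuration and leaves all distances unchanged, so I would work with the plain inversion $1/w$. I may also assume $w_1\neq w_2$ and $w_1,w_2\neq 0$, since the omitted cases are trivial or follow by continuity.

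The key idea, and the reason one obtains $16$ rather than the crude $32$, is to use \emph{both} omitted values simultaneously via a Möbius change of variable. I would introduce $S(\zeta)=\dfrac{\zeta}{1-\zeta/\zeta_1}$, the Möbius map with $S(0)=0$, $S'(0)=1$, and a single pole at $\zeta_1$, and form $\Psi:=S\circ\Phi$. Because $\zeta_1\notin\Phi(\overline{\mathbb D})$, the pole of $S$ is never attained, so $\Psi$ is analytic on $\overline{\mathbb D}$; moreover $\Psi(0)=0$, $\Psi'(0)=S'(0)\,\Phi'(0)=1$, and $\Psi(z)=0$ only at $z=0$ (since $S(\zeta)=0$ and $\Phi(z)=0$ force $\zeta=0$ and $z=0$). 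Hence $\Psi(z)=z+\cdots$ is precisely of the type governed by the Caratheodory theorem quoted in the proof of Lemma~\ref{l3}.

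Applying that theorem to $\Psi$, every value omitted by $\Psi$ has modulus at least $1/16$. Since $\Phi$ omits $\zeta_2$ and $S$ is injective, $\Psi$ omits $S(\zeta_2)=\dfrac{\zeta_1\zeta_2}{\zeta_1-\zeta_2}$, so $|S(\zeta_2)|\ge \tfrac{1}{16}$. The estimate then closes by the algebraic identity
$$|S(\zeta_2)|=\frac{|\zeta_1|\,|\zeta_2|}{|\zeta_1-\zeta_2|}=\frac{1}{\left|\dfrac{1}{\zeta_1}-\dfrac{1}{\zeta_2}\right|}=\frac{1}{|w_1-w_2|},$$
which converts $|S(\zeta_2)|\ge \tfrac{1}{16}$ into $|w_1-w_2|\le 16$. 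Taking the supremum over $w_1,w_2\in M_a$ yields $d(M_a)\le 16$.

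The main obstacle is conceptual rather than computational. The direct route, namely observing $\zeta_i\notin \{|\zeta|<1/16\}$ hence $|w_i|\le 16$, only gives $d(M_a)\le |w_1|+|w_2|\le 32$, because nothing in that argument prevents the two omitted points from lying on opposite sides of the covered disc. Beating this requires exploiting the normalization $\Phi(z)=z+\cdots$ more fully, and the device that makes this work is absorbing one omitted value $\zeta_1$ into a pole of the Möbius factor $S$: this keeps the composite $\Psi$ inside Caratheodory's class while turning the \emph{second} omitted value directly into the reciprocal of $|w_1-w_2|$. The points I expect to need care with are verifying that $\Psi$ stays normalized and zero-free (so that Lemma~\ref{l3} genuinely applies) and cleanly disposing of the degenerate configurations $w_1=0$ and $\zeta_1=\zeta_2$.
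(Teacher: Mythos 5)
Your proof is correct, and it differs from the paper's exactly at the step that matters, so a comparison is worthwhile. The paper applies Lemma~\ref{l1} together with the $1/16$ Caratheodory bound of Lemma~\ref{l3} (cited there, by a slip, as Lemma~\ref{l2}) to the \emph{fixed} map $\Phi$, which by itself only yields $|\mu_1-\mu_0|\le 16$ for the distinguished center $\mu_0$ --- i.e.\ a diameter bound of $32$ --- and then closes with the single unproved sentence that ``this estimate is valid for arbitrary $\mu_0$ and $\mu_1$ from $M_a$.'' The implicit content of that sentence is a re-centering argument: any point of $M_a$ corresponds to a Schur stable polynomial, hence can serve as the base point of the whole construction, with a correspondingly modified $\Phi$. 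Your M\"obius device is precisely a rigorous, self-contained version of that missing step, carried out on the function-theoretic side: with $S(\zeta)=\zeta/(1-\zeta/\zeta_1)$ and $w_1=1/\zeta_1$ one computes
\[
\Psi(z)=S(\Phi(z))=\frac{z}{1+q(z)-w_1z},
\]
which is exactly the map $\Phi$ that the paper's re-centering at $w_1$ would produce; so both proofs end up applying Caratheodory's theorem to the same auxiliary function, but yours constructs it explicitly, checks that it stays in the Caratheodory class (analytic since the pole of $S$ sits at the omitted value $\zeta_1$, normalized $z+\cdots$, zero-free off the origin), and turns the second omitted value into $1/|w_1-w_2|$ by a clean identity. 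Your side remarks are also sound: the naive one-point argument cannot beat $32$, and the degenerate cases are harmless (if $w_1=w_2$ there is nothing to prove, and if some $w_i=0$ the direct Lemma~\ref{l3} estimate $|\zeta_j|\ge 1/16$ already gives $|w_1-w_2|\le 16$). In short: same ingredients (Lemma~\ref{l1} plus Caratheodory), but your route supplies a complete proof of the pairwise estimate where the paper merely asserts it.
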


\begin{proof} Lemma 2 implies that the set $\bar C\backslash {\Phi }(\overline{\mathbb D})$ does not contain the central  disc of radius 1/16.  
Then Lemma 1 implies that for $\mu_1\in M_a$  
$$
\frac 1{|\Delta \mu|}=\frac 1{|\mu_0- \mu_1|}\ge \frac1{16},
$$
therefore $|\mu_0- \mu_1|\le16.$ This estimate is valid for arbitrary $\mu_0$ and $\mu_1$ from $M_a$. The theorem is proved.
\end{proof}

\begin{theorem}\label{t2}
 Let for some $\mu_0$ the set $ A_{N}(\mu_0)$ is not empty and let $a=(a_1,\dots a_N)\in A_N(\mu_0).$ Then $d(M^{(j)}_a)< 4,$ where $M^{(j)}_a$ is a connected component of the set $M_a$  that contains $\mu_0.$ 
\end{theorem}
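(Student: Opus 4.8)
The plan is to upgrade the argument of Theorem \ref{t1} by replacing the Carath\'eodory radius $1/16$ of Lemma \ref{l3} with the K\"obe radius $1/4$ of Lemma \ref{l2}. The subtlety is that Lemma \ref{l2} controls the \emph{simply connected hull} $\Phi^s(\mathbb D)$, not $\Phi(\mathbb D)$ itself, so its complement is only useful on the unbounded piece; this is exactly why one must restrict to a single component. First I would record that the whole construction preceding Lemma \ref{l1} goes through verbatim with $\mu_0$ replaced by an arbitrary base point $\mu^\ast\in M_a$: for such $\mu^\ast$ the polynomial $\lambda^N+\tilde p(\lambda)$, where $\tilde p(\lambda)=(a_1-\mu_0-\mu^\ast)\lambda^{N-1}+a_2\lambda^{N-2}+\dots+a_N$, is Schur stable (this is precisely the statement $\mu^\ast\in M_a$), so writing the defining polynomial of $M_a$ as $\lambda^N-(\mu-\mu^\ast)\lambda^{N-1}+\tilde p(\lambda)$ and applying Lemma \ref{l1} gives $\mu\in M_a$ if and only if $1/(\mu-\mu^\ast)\in\bar{\mathbb C}\setminus\tilde\Phi(\overline{\mathbb D})$, with $\tilde\Phi(z)=z/(1+\tilde q(z))$ and $\tilde q(z)=(a_1-\mu_0-\mu^\ast)z+a_2z^2+\dots+a_Nz^N$. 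As in the remark after Lemma \ref{l1}, Schur stability of $\lambda^N+\tilde p(\lambda)$ forces all poles of $\tilde\Phi$ strictly outside $\overline{\mathbb D}$, so $\tilde\Phi(\overline{\mathbb D})$ is compact; let $\Omega_\infty$ be the unbounded connected component of $\bar{\mathbb C}\setminus\tilde\Phi(\overline{\mathbb D})$.

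The topological core of the proof is the inclusion $\tilde\Phi^s(\mathbb D)\subseteq\bar{\mathbb C}\setminus\Omega_\infty$, and this is the step I expect to need the most care. I would argue by ``filling holes'': the minimal simply connected set containing the connected open set $\tilde\Phi(\mathbb D)$ is obtained by adjoining the bounded components of its complement, while the same operation applied to the compact set $\tilde\Phi(\overline{\mathbb D})$ produces exactly $\bar{\mathbb C}\setminus\Omega_\infty$. Since $\tilde\Phi(\mathbb D)\subseteq\tilde\Phi(\overline{\mathbb D})$ and the unbounded complementary component can only shrink as the set grows, this hull operation is monotone, which yields $\tilde\Phi^s(\mathbb D)\subseteq\bar{\mathbb C}\setminus\Omega_\infty$. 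Combining this with Lemma \ref{l2}, which places the disc $\{|w|<\varphi'(0)/4\}$ with $\varphi'(0)>1$ inside $\tilde\Phi^s(\mathbb D)$, I conclude that $\Omega_\infty$ is disjoint from $\{|w|<\varphi'(0)/4\}$; hence every finite $w\in\Omega_\infty$ satisfies $|w|\ge\varphi'(0)/4>1/4$.

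Finally I would assemble the diameter estimate. Let $\mu_1,\mu_2\in M_a^{(j)}$ be arbitrary and choose the base point $\mu^\ast=\mu_1$, which lies in $M_a$, so the previous paragraphs apply. The M\"obius map $g(\mu)=1/(\mu-\mu_1)$ is a homeomorphism of $\bar{\mathbb C}$ sending $\mu_1$ to $\infty$; by Lemma \ref{l1} it carries $M_a$ into $\bar{\mathbb C}\setminus\tilde\Phi(\overline{\mathbb D})$, and it carries the connected set $M_a^{(j)}$ (which contains $\mu_1$) to a connected subset containing $g(\mu_1)=\infty$, hence entirely into $\Omega_\infty$. In particular $g(\mu_2)=1/(\mu_2-\mu_1)\in\Omega_\infty$, so $|1/(\mu_2-\mu_1)|\ge\varphi'(0)/4>1/4$, giving $|\mu_2-\mu_1|\le 4/\varphi'(0)<4$. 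As $\mu_1,\mu_2$ were arbitrary in $M_a^{(j)}$, this proves $d(M_a^{(j)})<4$. The decisive point, and the reason the sharper constant $4$ (versus $16$ in Theorem \ref{t1}) must be paid for by restricting to one component, is that putting the base point at one of the two competing multipliers converts the lower bound $|w|>1/4$ on $\Omega_\infty$ directly into an upper bound on the distance $|\mu_2-\mu_1|$.
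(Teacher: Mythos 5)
Your proof is correct and is essentially the paper's own argument: Lemma \ref{l1} applied after re-basing at an arbitrary point of $M_a$, combined with the K\"obe-type bound of Lemma \ref{l2} for the simply connected hull, with the inversion $\mu\mapsto 1/(\mu-\mu_1)$ converting the central $1/4$-disc into the diameter bound $4$. If anything, your write-up is more careful than the paper's two-line proof: you establish the inclusion the argument actually needs (the component containing the base point maps into the unbounded complementary component $\Omega_\infty$, which is disjoint from the hull by monotonicity of the hole-filling operation), whereas the paper states only the reverse implication and leaves this connectedness step implicit.
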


\begin{proof} Lemma \ref{l2} implies that that the set $\bar{\mathbb C}\backslash {\Phi^s }(\overline{\mathbb D}$ does not contain a central disc of radius 1/4. Now, by Lemma \ref{l1} if $1/\mu_1\in \bar{\mathbb C}\backslash {\Phi^s }(\overline{\mathbb D}$ then $\mu_1\in M_a^{(j)}.$

From here
$
\frac 1{|\Delta \mu|}=\frac 1{|\mu_0- \mu_1|}>\frac1{4},
$
and $|\Delta\mu|=|\mu_0- \mu_1|<4.$ The estimate is valid for any  $\mu_0$ and $\mu_1$ in  $M^{(j)}_a$ . The theorem is proved. 
\end{proof}

Remark that the value of the radius in the Theorem 2 can not be reduced in general.
Indeed, for $\mu _{0} =0$, $\varepsilon \in \left(0,\, 1\right)$ the vector $\left(a_{1} ,\, a_{2} \right)=\left(2(1-\varepsilon ),\, 1-\varepsilon \right)$ belongs to the set $A_{2} (0)=\{(a_1,a_2): a_2+1>|a_1|, a_2<1 \}.$ Then the multiplier
$\mu_1=4-3\varepsilon$ belongs to the set $M_a.$ It is clear that $\sup_{\varepsilon\in (0,1)}|\mu_0-\mu_1|=4.$

On the Fig. 1 the set $\Phi(\mathbb D)$ is displayed for $a_1=2(1-\epsilon),$
$a_2=1-\epsilon$ where $\epsilon=0.1.$ It is simply connected and contains entirely the central disc of the radius 1/4 (a black spot at the origin). The set $M_a$ is inverse with respect to the unit circle of the exterior of the set $\Phi(\mathbb D)$. It is entirely containing in the disc of radius 4, which is an inversion of the exterior of the disc of radius 1/4 (see Fig. 2). With decreasing $\epsilon$  the diameter of the set $M_a$ approaches 4.

\centerline{
\includegraphics[scale=0.35]{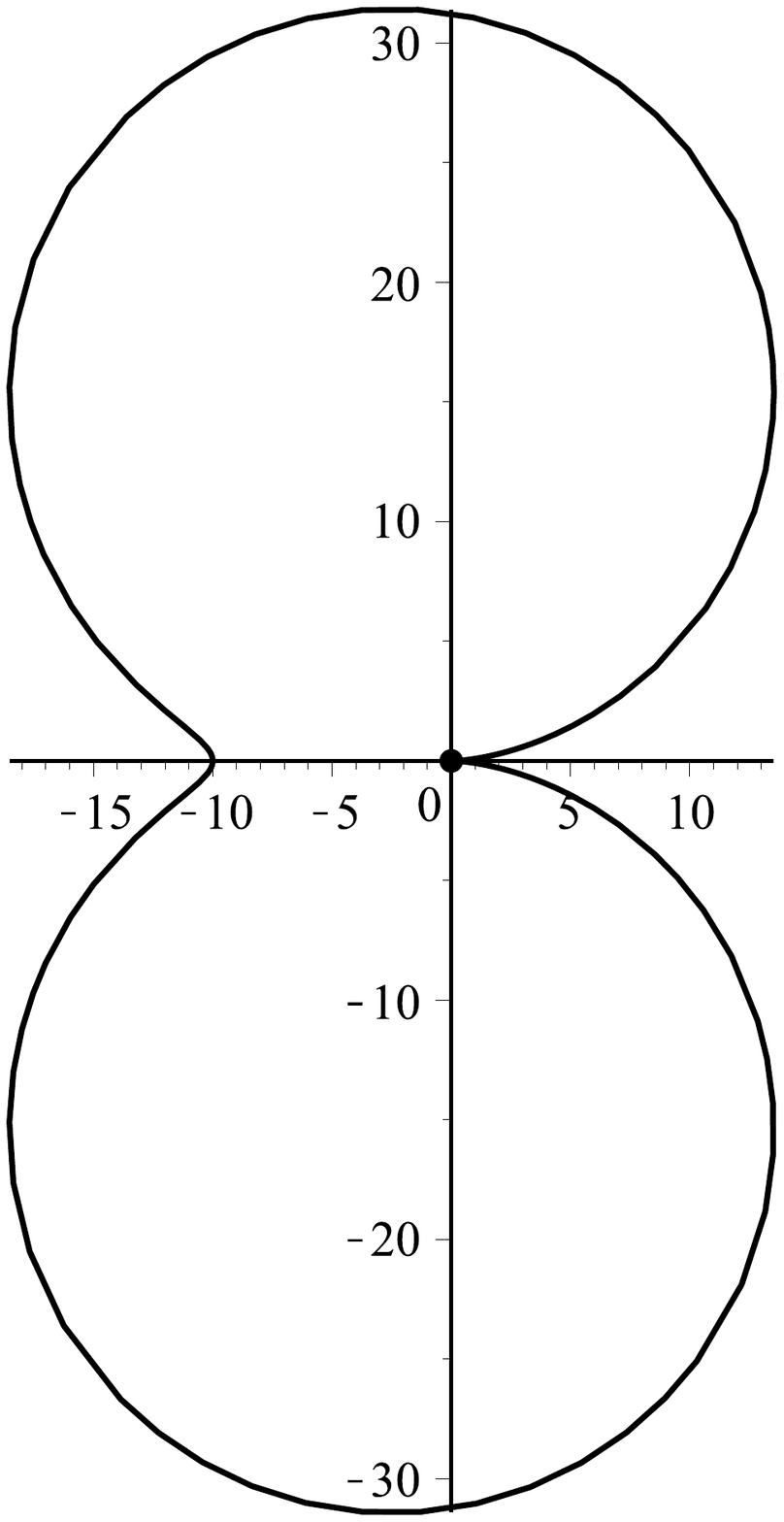}
}
\centerline{Fig. 1}


Figure 1 
displays image of the circle $\mathbb D$ under the mapping $F(z)=\frac{z}{1+a_{1} z+a_{2} z^{2} } $ and center circle
radius 1/4 ($\varepsilon =0.1$).

\centerline{
\includegraphics[scale=0.25]{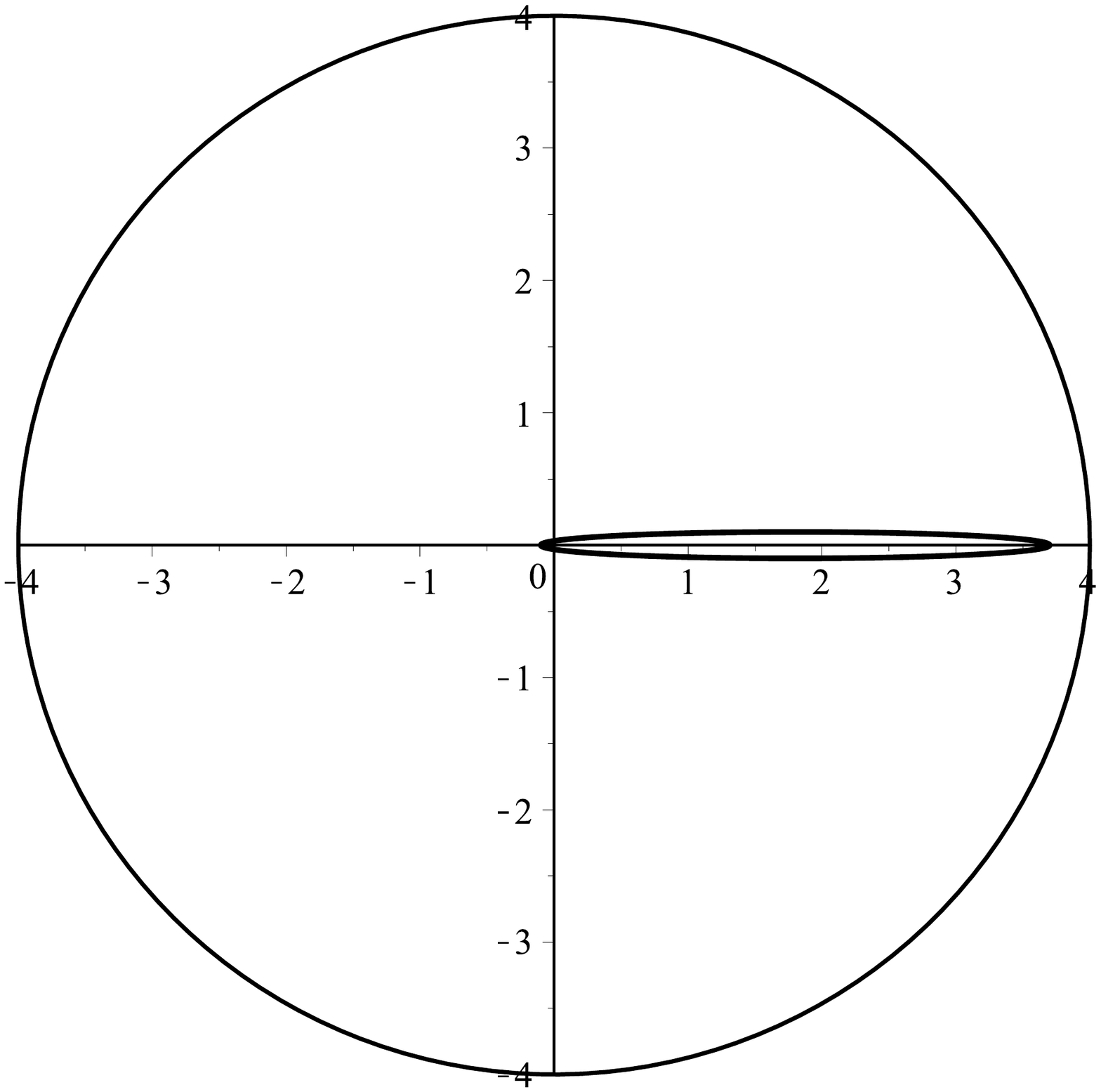}
}
\vspace{1cm}
\centerline{ Fig. 2}

Fig. 2 
displays the region $M_{a} $ and a central circle of radius 4 ($\varepsilon =0.1$).

\begin{theorem}\label{t3}
If diameter of the set $M$ is larger then 16 or a diameter of any of its connected component is larger then 4, then for any $N$ there is no control (2) that stabilizes equilibrium of the system (1) for all admissible parameters of the system.
\end{theorem}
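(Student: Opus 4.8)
The plan is to prove the contrapositive: if there exists a control of the form \eqref{2} that stabilizes the equilibrium of \eqref{1} for all admissible parameters, then necessarily $d(M)\le 16$ and every connected component of $M$ has diameter strictly less than $4$. By the characterization recorded just before Theorem~\ref{t1}, the existence of such a control is equivalent to the existence of a coefficient vector $a=(a_1,\dots,a_N)$ for which every multiplier in $M$ is stabilized, i.e. $M\subset M_a$ once a reference point $\mu_0\in M$ is fixed. The observation I would exploit is that $\mu_0$ enters only as bookkeeping for the construction and not into the control itself: if $a$ stabilizes every multiplier in $M$, then the polynomial at $\mu_0$ is Schur stable for \emph{every} $\mu_0\in M$, so $a\in A_N(\mu_0)$ for any such choice and the inclusion $M\subset M_a$ persists after the corresponding translation. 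Since the bounds $16$ and $4$ supplied by Theorems~\ref{t1} and~\ref{t2} do not depend on $N$, a contradiction obtained for one $N$ is in fact a contradiction for every $N$.

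First I would dispose of the global case. Fixing any $\mu_0\in M$ together with the stabilizing vector $a\in A_N(\mu_0)$, the inclusion $M\subset M_a$ and the monotonicity of the diameter under set inclusion give $d(M)\le d(M_a)$, while Theorem~\ref{t1} yields $d(M_a)\le 16$. Hence $d(M)\le 16$, which contradicts the assumption $d(M)>16$ and rules out a control in the first case.

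For the component case I would argue as follows. Suppose some connected component $C$ of $M$ satisfies $d(C)>4$, and suppose for contradiction that a stabilizing vector $a$ exists. Here I use the freedom in $\mu_0$ and place it inside the offending component, choosing $\mu_0\in C$. Since $C$ is connected and $C\subset M\subset M_a$, the set $C$ is a connected subset of $M_a$ and therefore lies entirely within a single connected component of $M_a$; because $\mu_0\in C$, that component is exactly the one containing $\mu_0$, namely $M_a^{(j)}$. Theorem~\ref{t2} then gives $d(M_a^{(j)})<4$, whence $d(C)\le d(M_a^{(j)})<4$, contradicting $d(C)>4$. This forbids a control in the second case and finishes the argument.

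The step I expect to be the only non-formal point is the justification that $\mu_0$ may be relocated into the component $C$. This rests on the fact that Theorem~\ref{t2} is stated relative to whatever point plays the role of $\mu_0$, and that the existence of a stabilizing control is a property of the coefficient vector $a$ alone, invariant under the choice of $\mu_0\in M$. Once this is granted, the remaining ingredients — monotonicity of the diameter under inclusion, the fact that a connected set lies in one component, and translation invariance of the diameter — are entirely routine.
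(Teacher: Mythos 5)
Your proof is correct and follows essentially the same route as the paper: the existence of a stabilizing control yields a single coefficient vector $a$ with $M\subset M_a$, after which the diameter bounds of Theorems~\ref{t1} and~\ref{t2} give both conclusions. The paper's own proof is a two-line appeal to those theorems; your version merely supplies the details it leaves implicit, in particular the placement of $\mu_0$ inside the offending component so that Theorem~\ref{t2} (which bounds only the component containing $\mu_0$) actually applies to it.
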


\begin{proof}
If exist stabilizing control (2) then there exists a vector of coefficients $a=(a_1,\dots,a_N)$ such that the family of polynomials
$$
\{
\lambda^N-\mu\lambda^{N-1}+a_1\lambda^{N-1}+a_2\lambda^{N-2}+\dots+a_N:\mu\in M
\}
$$ 
is Schur robust stable. By theorem \ref{1} the diameter of the set $M$ cannot exceed 16 while the diameter of any of its connected component cannot exceed 4. The theorem is proved.
\end{proof}

Now, let us consider the case where the function $F_h$ in the system (1) is defined precisely, i.e. the set of the admissible parameters consists of one point: $H=\{h_0\}$. Denote $F_{h_0}=F$ and $x^*_{h_0}=x^*.$

\begin{theorem}\label{t4}
If  spectrum $\{\mu_1,\dots,\mu_m \}$ of Jacobi matrix $F^\prime(x^*)$ has a diameter greater then 16, then there is no control \eqref{2} that stabilizes the equilibirum $x^*.$ 
\end{theorem}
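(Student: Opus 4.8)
The plan is to read Theorem \ref{t4} as the single-parameter specialization of Theorem \ref{t3} and to reduce it to the diameter estimate of Theorem \ref{t1}. When $H=\{h_0\}$ the family $F_h$ collapses to the one map $F$, so the set of admissible multiplier locations is exactly the finite spectrum, $M=\{\mu_1,\dots,\mu_m\}$. Consequently $d(M)=\max_{1\le i,j\le m}|\mu_i-\mu_j|$ is literally the diameter of the spectrum, and the hypothesis of the theorem reads $d(M)>16$.

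I would then argue by contradiction: suppose some control \eqref{2} stabilizes $x^*$. Its linear part has characteristic polynomial \eqref{3}, namely $f(\lambda)=\prod_{j=1}^m\chi_{\mu_j}(\lambda)$, and stabilization forces $f$ to be Schur stable, hence every factor $\chi_{\mu_j}$ to be Schur stable. Writing $a=(a_1,\dots,a_N)$ for the coefficient vector built from the strengthening coefficients $\varepsilon_j$ of this control, I would fix a base point $\mu_0=\mu_1$. Schur stability of $\chi_{\mu_1}$ gives $a\in A_N(\mu_1)$, so $A_N(\mu_1)\neq\emptyset$, while Schur stability of each $\chi_{\mu_j}$ places every $\mu_j$ in $M_a$; thus $M=\{\mu_1,\dots,\mu_m\}\subseteq M_a$.

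With the inclusion $M\subseteq M_a$ in hand, Theorem \ref{t1} supplies $d(M_a)\le 16$, whence $d(M)\le d(M_a)\le 16$, contradicting $d(M)>16$. Therefore no control of the form \eqref{2} can stabilize $x^*$, which is the assertion of the theorem.

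There is no genuine analytic obstacle here: all the hard work is already carried by Lemma \ref{l2}, Lemma \ref{l3} and Theorem \ref{t1}. The only step requiring care is the bookkeeping---recognizing that in the one-point case $M$ coincides with the spectrum, and verifying the chain of equivalences ``\eqref{2} stabilizes $x^*$'' $\Longleftrightarrow$ ``\eqref{3} is Schur stable'' $\Longleftrightarrow$ ``every $\chi_{\mu_j}$ is Schur stable for a common $a$,'' so that all the $\mu_j$ indeed lie in a single set $M_a$. Once this is secured the diameter bound is immediate.
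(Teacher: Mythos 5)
Your proposal is correct and follows the paper's own route: the paper states Theorem \ref{t4} without a separate proof, treating it as the immediate specialization of Theorem \ref{t3} to the one-point parameter set $H=\{h_0\}$, where $M$ is exactly the spectrum, and Theorem \ref{t3} itself is proved precisely by your argument (a stabilizing control gives a common coefficient vector $a$ with $M\subseteq M_a$, and Theorem \ref{t1} bounds $d(M_a)$ by $16$). Your writeup merely makes explicit the bookkeeping the paper leaves implicit, so there is nothing to object to.
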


If $M$ consists of real numbers then it is possible to state not only a necessary condition of existing the stabilizing control (Theorem \ref{t5}) but a sufficient as well (Theorem \ref{t6}).

\begin{theorem}\label{t5}
Assume that the spectrum of Jacobi matrix of the system (1) is real for all admissible values of the parameters. And assume that the set $M$ is simply connected. Then a necessary condition for the existence of stabilizing control (2) is the length of the interval, defined by $M$ to be at most 4 and all the numbers from it are less then 1.
\end{theorem}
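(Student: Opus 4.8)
The plan is to derive both conclusions from the two-dimensional diameter bound of Theorem \ref{t2} together with the elementary sign argument that opens Section 2. First I would fix the geometry. Because the spectrum of the Jacobi matrix is real at every admissible parameter, the set $M$ lies on the real axis; being simply connected and one-dimensional it is therefore an interval, and its length coincides with its diameter $d(M)$. So both assertions of the theorem are really statements about this single interval.

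For the length bound I would invoke the equivalence recorded just before the definition of $A_N(\mu_0)$: a stabilizing control \eqref{2} exists if and only if there is a vector $a=(a_1,\dots,a_N)\in A_N(\mu_0)$ with $M\subset M_a$. The decisive observation is that $M$ is connected and contains the base point $\mu_0$, so it cannot straddle two components of $M_a$; it must sit inside the single connected component $M_a^{(j)}$ that passes through $\mu_0$. Hence $d(M)\le d(M_a^{(j)})$, and Theorem \ref{t2} gives $d(M_a^{(j)})<4$. Since $d(M)$ is the length of the interval $M$, that length is below $4$, in particular at most $4$.

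For the bound $\mu<1$ I would note that every $\mu\in M$ lies in $M_a$, so the polynomial $\chi_\mu(\lambda)=\lambda^N+(-\mu+a_1)\lambda^{N-1}+a_2\lambda^{N-2}+\dots+a_N$ is Schur stable. Evaluating at $\lambda=1$ and using $\sum_{j=1}^N a_j=0$ yields $\chi_\mu(1)=1-\mu$. Repeating the mean value reasoning from the opening paragraphs of Section 2 --- $\chi_\mu$ is positive for large real $\lambda$, so a value $\chi_\mu(1)\le 0$ would force a real root in $[1,\infty)$, contradicting Schur stability --- I conclude $1-\mu>0$, i.e. $\mu<1$ for every $\mu\in M$.

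The step I expect to be the main obstacle is the localization: one must be certain that the whole interval $M$ is confined to the one component $M_a^{(j)}$ of Theorem \ref{t2} and does not leak into other components of $M_a$, which individually carry only the crude bound $16$ of Theorem \ref{t1}. This is exactly where the simple connectedness of $M$ and the inclusion $\mu_0\in M$ are indispensable, and it is what sharpens the constant from $16$ down to $4$. Everything else is bookkeeping on top of the already-proved Theorems \ref{t1} and \ref{t2}.
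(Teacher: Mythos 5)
Your proposal is correct and follows exactly the route the paper intends: the paper states Theorem \ref{t5} without an explicit proof, as an immediate corollary of the component-diameter bound of Theorem \ref{t2} (via the observation that a connected real $M$ containing $\mu_0$ must lie in the single component $M_a^{(j)}$ through $\mu_0$) together with the sign argument $\chi_\mu(1)=1-\mu>0$ from the opening of Section 2. Your write-up simply makes these implicit steps explicit, including the key localization point, so it matches the paper's approach.
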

  
\begin{theorem}\label{t6}
Let $M=(a,b),$ where $-3<a<b<1.$ Then there is a control $u_n=-\epsilon(x_{n-1}-x_n)$ that locally stabilizes the equilibrium of the system (1).
\end{theorem}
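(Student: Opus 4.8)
The plan is to observe that the proposed control $u_n=-\epsilon(x_{n-1}-x_n)$ is precisely the single-delay case $N=2$ of the general scheme \eqref{2} (with $\varepsilon_1=\epsilon$), and to reduce the assertion to the elementary Schur-stability criterion for quadratics. First I would write down the characteristic factor \eqref{4} for $N=2$: the coefficients are $a_1=-\epsilon$ and $a_2=\epsilon$, so that
$$\chi_\mu(\lambda)=\lambda^2-(\mu+\epsilon)\lambda+\epsilon.$$
By the principle of linearized stability it suffices to exhibit a single $\epsilon$ for which $\chi_\mu$ is Schur stable simultaneously for every $\mu\in(a,b)$, since then every factor of the closed-loop characteristic polynomial \eqref{3} has all roots inside the unit disc.

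Next I would invoke the classical criterion that $\lambda^2+p\lambda+q$ is Schur stable if and only if $|q|<1$ and $|p|<1+q$. Applying it with $p=-(\mu+\epsilon)$ and $q=\epsilon$ splits the requirement into three inequalities: $|\epsilon|<1$; then $\mu<1$ (coming from $\mu+\epsilon<1+\epsilon$); and $\mu>-1-2\epsilon$ (coming from $-(\mu+\epsilon)<1+\epsilon$). The first is a constraint on $\epsilon$ alone, while the latter two must hold uniformly over $\mu\in(a,b)$.

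The heart of the argument is to check that the two-sided box $-3<a<b<1$ is exactly what makes these three demands compatible. The inequality $\mu<1$ holds for all $\mu\in(a,b)$ precisely because $b<1$; this is the vector analogue of the scalar observation, already recorded in the text, that every stabilizable multiplier must satisfy $\mu<1$. For the lower bound it is enough to ensure $-1-2\epsilon\le a$, i.e. $\epsilon\ge-(1+a)/2$, and since $M=(a,b)$ is open the boundary case is harmless. I would then verify that the admissible $\epsilon$-window is nonempty: the hypothesis $a>-3$ gives $-(1+a)/2<1$, while $a<1$ gives $-(1+a)/2>-1$, so every $\epsilon\in[-(1+a)/2,\,1)$ satisfies $|\epsilon|<1$ together with the lower-bound requirement. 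Fixing any such $\epsilon$ makes $\chi_\mu$ Schur stable for all $\mu\in(a,b)$, producing the desired stabilizing control and completing the proof.

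The main obstacle here is not analytic but a matter of careful bookkeeping with strict versus non-strict inequalities: one must exploit that $M=(a,b)$ is open, so the endpoint multipliers $\mu=a,b$ themselves need not be stabilized, and confirm that the two endpoint hypotheses $-3<a$ and $b<1$ are exactly the conditions rendering the feasible $\epsilon$-interval nonempty. It is worth noting that the length bound $b-a<4$ appearing in the necessary condition (Theorem \ref{t5}) is automatic under these hypotheses, so the genuine content of the theorem lies in the placement of $M$ inside $(-3,1)$ rather than in its width.
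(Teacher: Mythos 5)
Your proof is correct and follows essentially the same route as the paper: the paper likewise factors the closed-loop characteristic polynomial into quadratics $\lambda^2+(-\mu_j-\epsilon)\lambda+\epsilon$ and checks that for $\epsilon\in\left(-\tfrac{1+a}{2},1\right)$ the coefficient vectors lie in the Schur-stability region $A_2(0)=\{(a_1,a_2):a_2+1>|a_1|,\,a_2<1\}$, which is exactly your quadratic criterion $|q|<1$, $|p|<1+q$. The only (immaterial) difference is that you also admit the endpoint $\epsilon=-(1+a)/2$ by exploiting the openness of $(a,b)$, whereas the paper keeps the $\epsilon$-interval open.
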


\begin{proof}
The characteristic polynomial of the closed-loop system can be written in the form
$$
f(\lambda)=\prod_{j=1}^m(\lambda^2+(-\mu_j-\epsilon)\lambda+\epsilon),
$$
where $\mu_j\in (a,b).$ If $\epsilon\in (-\frac{1+a}2,1)$ the vectors $(-\mu_j-\epsilon,\epsilon), j=1,...,m$ belongs to the stability region 
$A_2(0)=\{(a_1,a_2):a_2+1>|a_1|,a_2<1\}.$ Therefore for $\epsilon\in (-\frac{1+a}2,1)$ the characteristic polynomial of the closed-loop system is Schur stable for all $\mu_j\in (a,b).$ The theorem is proved.
\end{proof}

Let us mention, that even when all conjectures of the Theorem \ref{t6} are fulfilled, i.e. when it is known that there exists linear stabilizing control, practically it cannot be implemented. As it mentioned in \cite{[6]} the basin of attraction of the stable equilibrium can turns to be too small.

\subsection{Examples}

{\bf A.} Consider the controllable chaotic system 
\begin{equation} \label{6}
x_{n+1} =h\sin(\pi x_{n})+u_{n} ,
\end{equation}
\begin{equation} \label{7}
u=-\sum _{j=1}^{N-1}\varepsilon _{j} \left(x_{n-j} -x_{n-j+1} \right),
\end{equation}
where $h\in(-h_0,1/\pi), 1/\pi<h_0\le1.$ In this case $M=(-\pi h_0,1).$

If $h_0\le\frac3\pi$ then accordingly to the Theorem 6 there exists a control 
\eqref{6} for $N=2$ that locally stabilizes the trivial equilibrium $x^*=0$ of the system \eqref{6} for all $h$ from $M.$ As closer $h_0$ is to $3/\pi$ as smaller become the basin of attraction of the trivial equilibrium. 

Now, let $3/\pi<h_0\le1.$ For every fixed  $h\in(-h_0,-\frac{3}{\pi})$ there exists the stabilizing control \eqref{7}. I.e. the strength coefficients in
\eqref{7} should depend on $h.$ At the same time $\mu\in(-\pi,-3)$ and the condition $2^2<1-\mu\le2^3$ implies that for $N=3$ it is possible the stabilization with two-steps control $u_n=-\epsilon_1(h)(x_{n-1}-x_{n})-\epsilon_2(h)(x_{n-2}-x_{n-1}).$

However, there is no $N$ that admits the control \eqref{7} independent on $h$ and stabilizing the trivial equilibrium for all $h\in(-h_0,1).$
\bigskip

\textbf{B.} Let consider controllable chaotic system linearised around an equilibrium:
\begin{equation} \label{8}
\begin{array}{c} {x_{n+1} = \mu_1 x_{n}+u_{n}^{(1)}} \\ 
{y_{n+1} =\mu_2 x_{n}+u_{n}^{(2)} } \end{array},
\end{equation}
\begin{equation} \label{9}
u_{n} =
\left(\begin{array}{c} {u_{n}^{(1)} } \\ {u_{n}^{(2)} } \end{array}\right)
=
-\sum _{j=1}^{N-1}\varepsilon _{j} 
\left(
\begin{array}{c} {x_{n-j} -x_{n-j+1} } \\ {y_{n-j} -y_{n-j+1} } \end{array}
\right),
\end{equation}
where $\mu_j\in M,j=1,2.$

Let us show that there exists a set $M$ of the diameter larger then 4 that admits a local stabilization of the trivial equilibrium of the system \eqref{8} by the control \eqref{9} for all $\mu_j\in M,j=1,2.$.

To construct such control we will use the results from \cite{[16]}. Let $\mu_1^0=-\frac{79}{24}$ and $\mu_2^0=-\frac{23}{24}.$ Since
  $\mu_1^0+\mu_2^0=\frac{17}4>4$ then if the 
stabilizing control does exists then the set $M$ cannot be connected. Let determine the necessary restriction on $N.$  Since $1-\mu^0_1\approx 4.29<2^3$ 
then $N\ge3.$ If $N=3$ then the polynomials $f_j(\lambda)=\lambda^3-\mu_j\lambda^2+a_1\lambda^2+a_2\lambda+a_3,\; j=1,2$ should be stable. 

For any stable polynomials $f_j(\lambda)$ of the degree $N\le 3$ the polynomials from the family $\{\theta f_1(\lambda)+(1-\theta)f_1(\lambda)\}$  should be stable as well \cite{[16]} . However by the 
the Theorem \ref{t3} it is impossible. Therefore, $N\ge 4.$ Let consider the vector $a=(-\frac76,\frac32,0,-\frac13)\in A_4\left(\frac{23}{24}\right)$ and let construct the image of the of the unit disc under the map $\Phi(z)=\frac z{1-\frac{23}{24}z-\frac76z+\frac32z^2-\frac13z^4}$ (see the Fig. 3). This image is not a simply connected set 
 (see the Fig. 4). The figures 5 and 6   
 displays the region of the possible location of $\Delta\mu$ of the displacement of the multiplier $\mu$ from $\mu_2^0.$


\centerline{
\noindent \includegraphics[scale=0.35]
{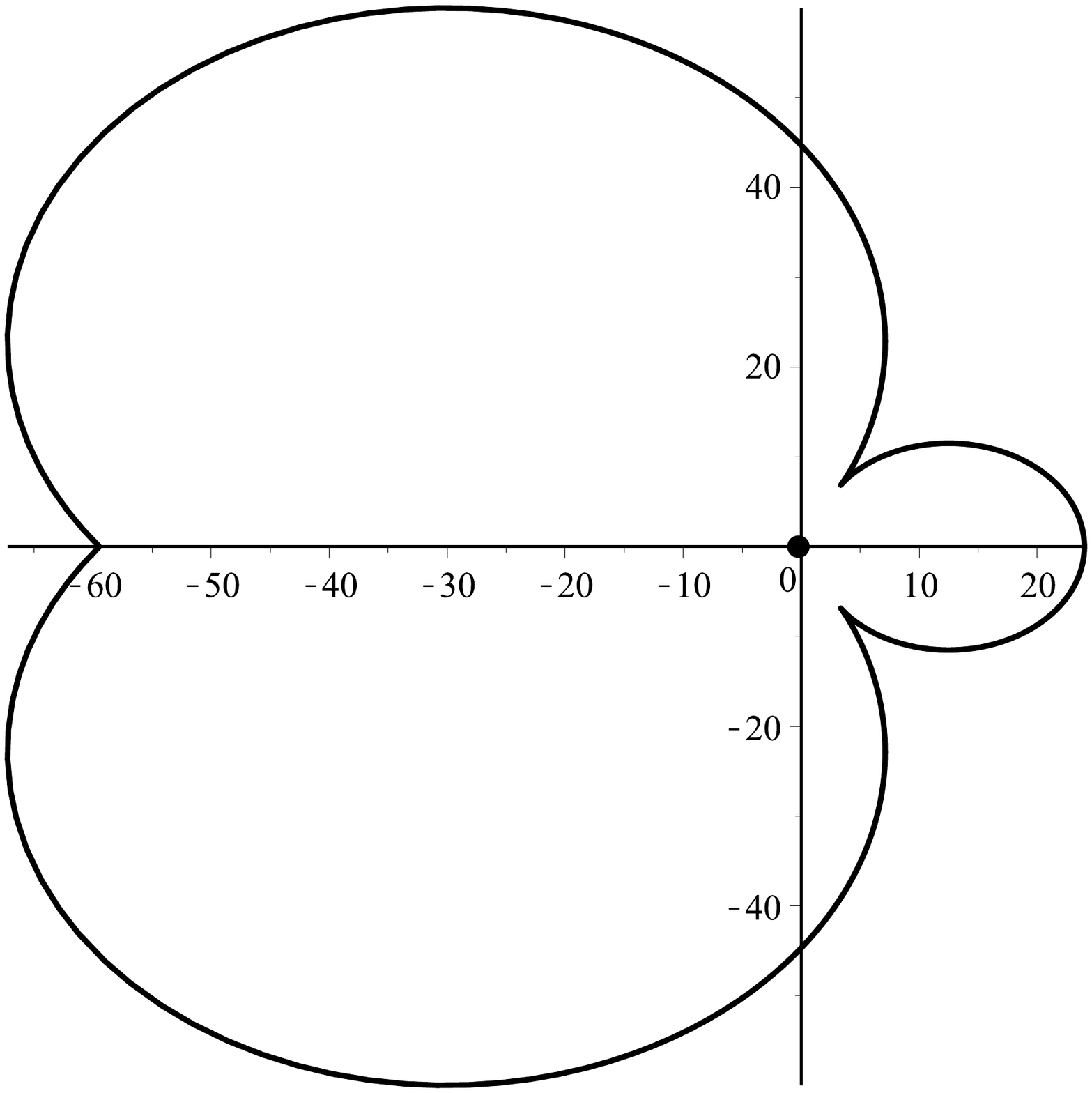}}

\vspace{1cm}
\centerline{Fig. 3. The image of the unit disc $\Phi(\mathbb D)$.}

\vspace{-1cm}
\centerline{ \includegraphics[scale=0.35]{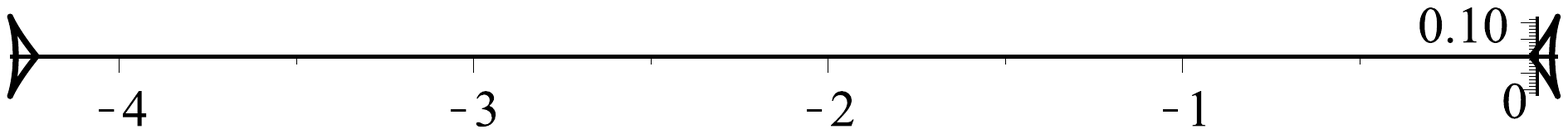}}
\vspace{-4.5cm}
{Fig. 4 The set $ \left(\bar C\backslash \bar{\Phi }(\overline{\mathbb D})\right)^{*} $ is not connected and can be written as a union of two simply-connected sets}\\

\centerline{
\includegraphics[scale=0.2]{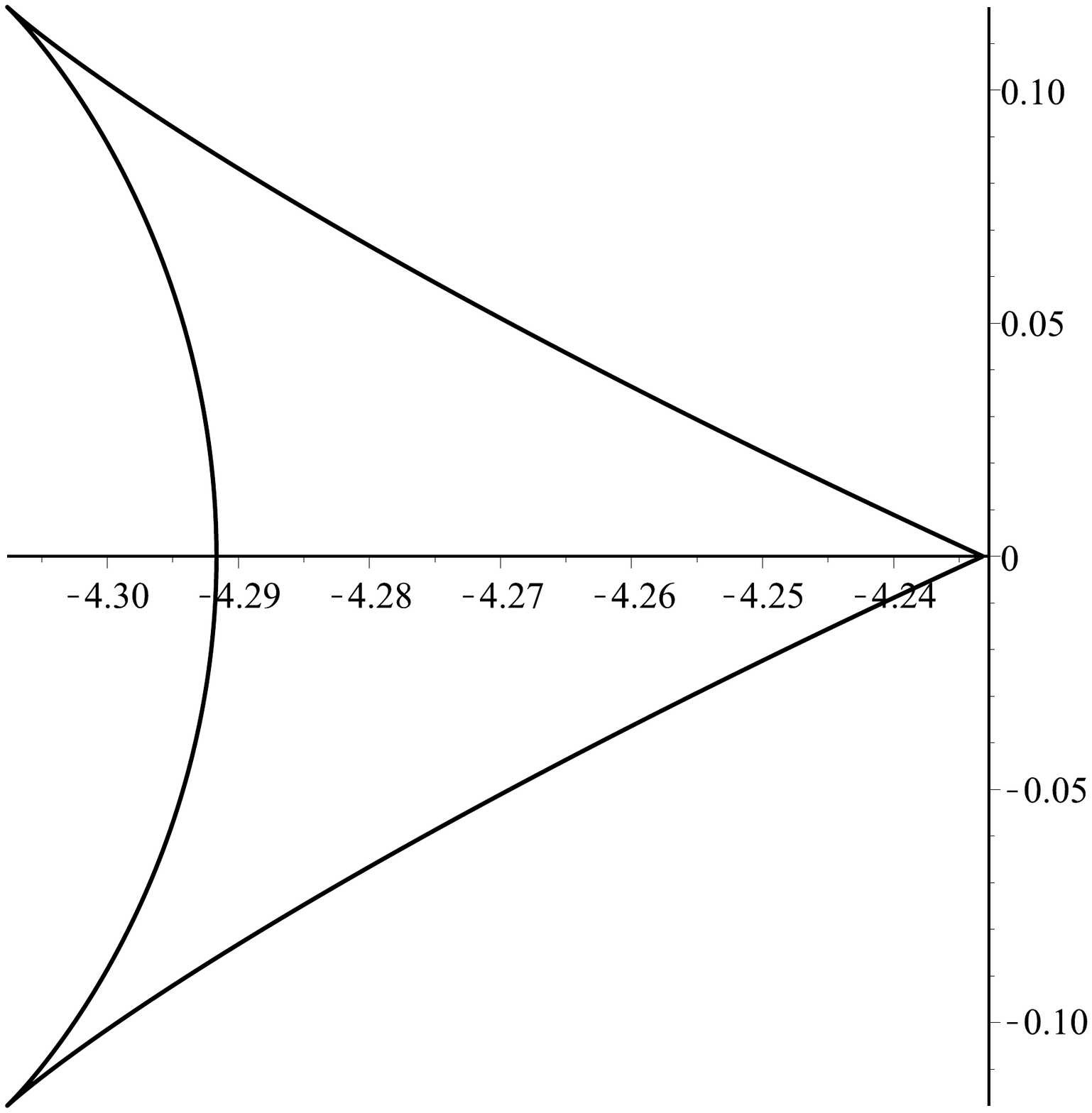}\hspace{2cm} \includegraphics[scale=0.2]{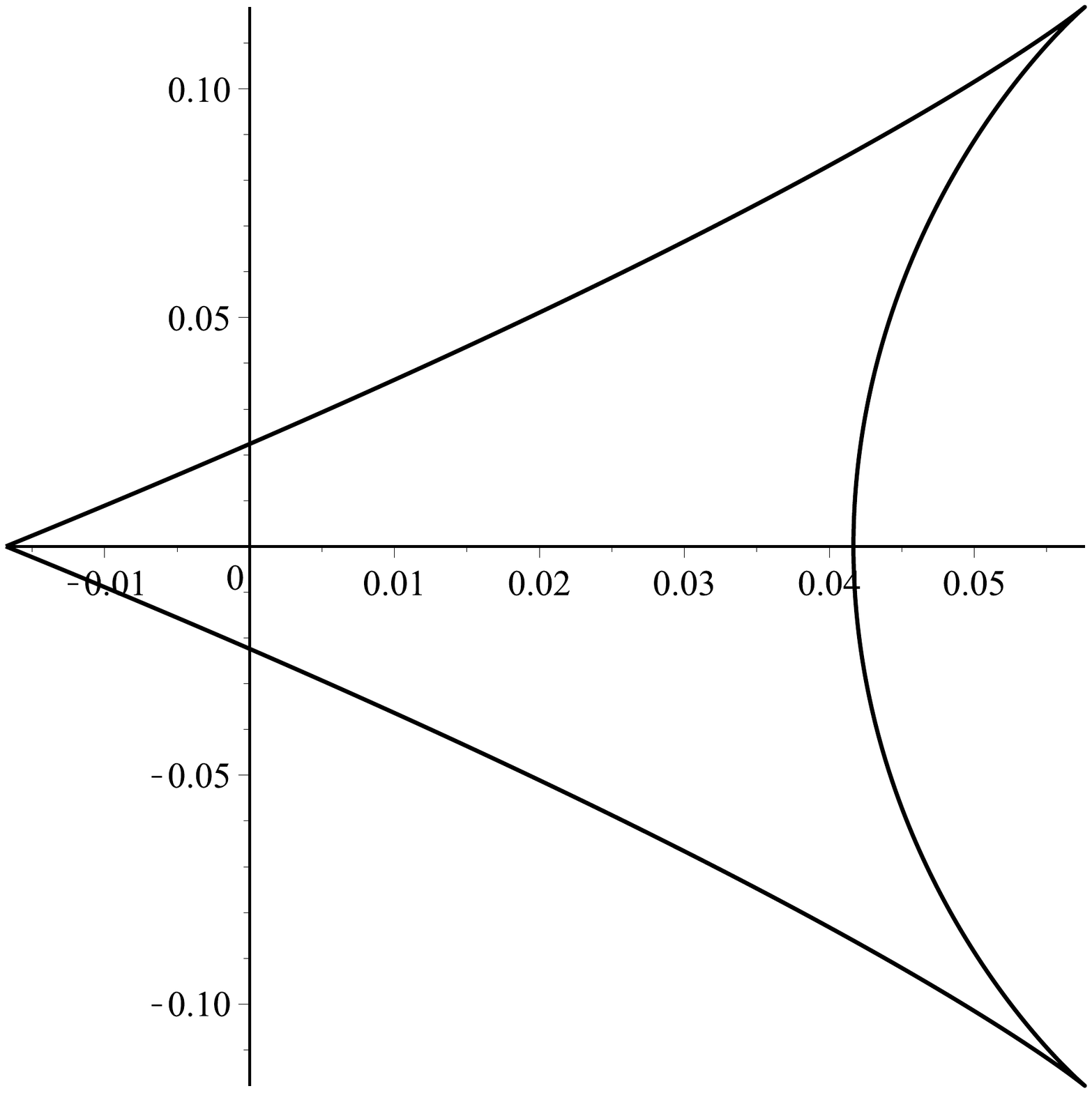}
}
\centerline{ Fig.5 \hspace{5cm} Fig.6}
\medskip

For the existence of the stabilizing control the equality $\mu_1^0=\mu_2^0+\Delta\mu$ should happen for some 
$\Delta\mu \in \left(\bar C\backslash \bar{\Phi }(\overline{\mathbb D})\right)^{*}.$ Indeed, the equality is valid for $\Delta\mu= -\frac{17}{4}$. Therefore for
$M=\{\mu: \mu-\mu_2^0\in M_1\}$ the stabilization is possible, the strength coefficients of the stabilizing control \eqref{9} are defined by the vector $a$ and they are
$\varepsilon_1=\frac78,$ $\varepsilon_2=-\frac13$
and $\varepsilon_3=-\frac13.$\\

{\bf C.} Let us consider a system
\begin{equation}\label{10}
\left( 
\begin{array}{c}
x_{n+1} \\
y_{n+1} \\
z_{n+1} \\
\end{array} 
\right)=h
\left( 
\begin{array}{c}
\sin(x_{n}+y_{n})\\
\sin(y_{n}+z_{n})\\
\sin(z_{n}+x_{n})\\
\end{array} 
\right),\qquad h\in H
\end{equation}
The system \eqref{10} has a trivial equilibrium that corresponds to the set of multipliers
$$
M_1=\left\{|h|e^{i\frac\pi3}, |h|e^{-i\frac\pi3}, 
-2|h|\right\}
$$
If $H=\{-h_0\}$ then for $|h_0|>\frac{16}{\sqrt 3}$ diameter of the set $M_1$ is bigger then 16 and by the Theorem \ref{t3} there is no control \eqref{2} that stabilizes the trivial equilibrium of the system \eqref{10} for $h=-h_0.$

Let $H=(-h_0,0).$ If $h_0>2$  there is no control \eqref{2} that stabilizes the trivial equilibrium of the system \eqref{10} for all $h\in (-h_0,0).$

\subsection{Conclusion}
In the above section the properties of the control with  specific structure that involves difference of the system stages computed in a certain shifted instants of time. Such type controls as well as classical ones, have a long history of applications in problems of stabilization (detecting) of unknown equilibriums or cycles in the systems with continuous or discrete time. However, the possibility of application of such control (or definite impossibility) has studied only in simplest partial cases. 

The action of such controls can be explained by the following aspects:\\

a) The space of the initial stages of the system \eqref{1} has dimension $m$ while after closing the system by the control \eqref{2} the space of the initial stages is changing, its dimension became $(N-1)m.$\\

b) The space of the initial stages of the closed-loop system
is splitting in the collection of invariant sets among them
appear stable minimal sets (and the basins of their attraction).\\

c) In the initial $m$-dimensional space this minimal sets correspond locally stable cycles, therefore the chaotic structure of the solutions is regularizing.\\

By this reason the problem of local stabilization of the equilibriums of the system \eqref{1} tunes out to related to the problem of chaos suppressing.

We do not touch the problem of estimating the basins of attraction. Our main goal is to demonstrate the limitation of the applicability of the linear controls thus justify the necessity of the nonlinear control. The theorem \ref{t3} demonstrates that the set $M$ of the possible location of multipliers of the system \eqref{1} cannot be arbitrary large for any {\it linear} control \eqref{2}, i.e. its diameter cannot exceed 16 and the diameter of any connected component cannot exceed 4 regardless of the system dimension $m$ and the number $N$ in the control \eqref{2}. On contrary, the application of the {\it nonlinear} control allows to stabilize chaos in the systems with arbitrary large set of locations of multipliers \cite{[9]} by increasing the number $N$ of the strength coefficients of the control
\eqref{nlc}.

Additionally, we have risen a problem of determine necessary and sufficient conditions on  the set $M$ that guaranties stability of the system \eqref{1} by the linear control \eqref{2} in a general situation. Let us mention that this problem is not solved even in simplest cases. Say, if $M=\{\mu\},$ where $\mu$ is a real number, then the necessary and sufficient condition for the stability is $\mu<1.$ However, if $\mu$ is complex and $M=\{\mu, \bar\mu \}$ the problem is open.   
       
\section{Non-Linear Control}
Having understanding of the limited power of the linear DFC it is naturally to consider the non-linear control \eqref{nlc}. The scalar case was considered in \cite{[10]}. Below we consider the vector valued case. There is a significant difference between these cases. Namely, the scalar case deals with real
multiplier while the vector deals with complex ones.

Assume again that the multipliers are located in a region $M.$ A close-loop system is of the form 
\begin{equation} \label{dsc} 
x_{n+1}=f_h(x_n) +u_n
\end{equation} 
where the control is non-linear.

The system \eqref{dsc} linearized around an equilibrium point takes the form
\begin{equation}\label{sys}
x_{n+1}=A\cdot\sum_{j=0}^{N-1} \alpha_j x_{n-j}.
\end{equation}
where Jacoby matrix can be transformed by a non-degenerate transformation to the upper triangular form
$$
A=\left( 
\begin{array}{ccccc}
\mu_1 & a_{12}&...&... &a_{1m} \\
0 & \mu_2 & ...& ...& a_{2m} \\
... &...&...&...&...\\
0&...&\mu_j&...& a_{jm} \\
... &...&...&...&...\\
0 & ... &...&...& \mu_m 
\end{array} \right)
$$
Remarkably enough  $\alpha_0+...+\alpha_{N-1}=1$ 
therefore we bypass the first obstacle in the linear control - now the close-loop system has solutions within the range.\\

The characteristic equation for the linear system \eqref{sys} is
$$
\prod_{k=1}^m (\lambda^N-\mu_k P(\lambda))=0
$$
where 
$P(\lambda)=\sum_{j=1}^{N} \alpha_j \lambda^{N-j}$
and $\mu_k\in M\subset \mathbb C.$

Now, the problem is given a region $M\subset\mathbb C$ find a number $N$ and construct polynomials
$P(\lambda)$ such that the family of the polynomials
$
\left\{\lambda^N-\mu_k P(\lambda)\right\}
$
 is Schur stable for all $\mu_k\subset M,\; k=1,..,m.$

Modifying the expression we get
$$
\frac1{\mu_k}=\sum_{j=1}^{N} \alpha_j \lambda^{-j}=\lambda^{-N}P(\lambda)=:q\left(\frac1\lambda\right).
$$
Now, if all zeros $\lambda$ are inside the unit disc $\mathbb D$ (Shur stability) then $\frac1\lambda $ lie outside the unit disc, i.e.
we came up with the conclusion that
$$
\frac1{\mu_k}\in \left\{ \bar{\mathbb C}\backslash q(z): z\in \mathbb D\right\},\; q(1)=1,\; q(0)=0.
$$
Therefore, the admissible domain for the multipliers is
$
\left\{ \bar{\mathbb C}\backslash q(z): z\in \mathbb D\right\}^*,
$
where $E^*$ denote the inversion of  $E$ with respect to the unit circle.  \\

Then the {\it stability criteria is}  
$M\subset (\bar C \backslash q(\bar{\mathbb D})^*.$  So, we came up with the following problem of geometric complex function theory: {\it given set of multipliers $M$ find a properly normalized polynomial map  $z\to q(z)$ such that 
$M\subset \left( \bar{\mathbb C}\backslash q(\bar{\mathbb D}) \right)^*.$} Then, find the optimal or almost-optimal coefficients.\\

A crucial case of the left half plane can be resolved in the following way.  If $\mu\in\{\Re(z)<0\}\cup\{|z|<1\}$ then this domain may be consider as a union of the domains 
$M_R:=\{|z+R|<R\}\cup\{|z|<1\}.$ If $R=N/2$ then choosing the polynomial map 
$$
q:z \to \frac2N\sum_{j=1}^N(1-\frac j{N+1})z^j
$$ 
we are guaranteed 
that the image of the unit disc will be to the right of the line  $\Re(z)=1/N.$ Therefore the inverse image of all the multipliers $\mu$ with $\Re(\mu)\le1/N$ will be inside the inverse image of that vertical line which is a circle  $|z+N/2|<N/2.$ Therefore,   $M$ can be covered by $\left( \bar{\mathbb C} \backslash q(\bar{\mathbb D}) \right)^*.$  On the Fig. 8 
the left image is $q(\bar{\mathbb D})$ with $N=12$ while the right is $\left( \bar{\mathbb C}\backslash q(\bar{\mathbb D})\right)^*.$ 
 \\

\centerline{
\includegraphics[scale=0.15]{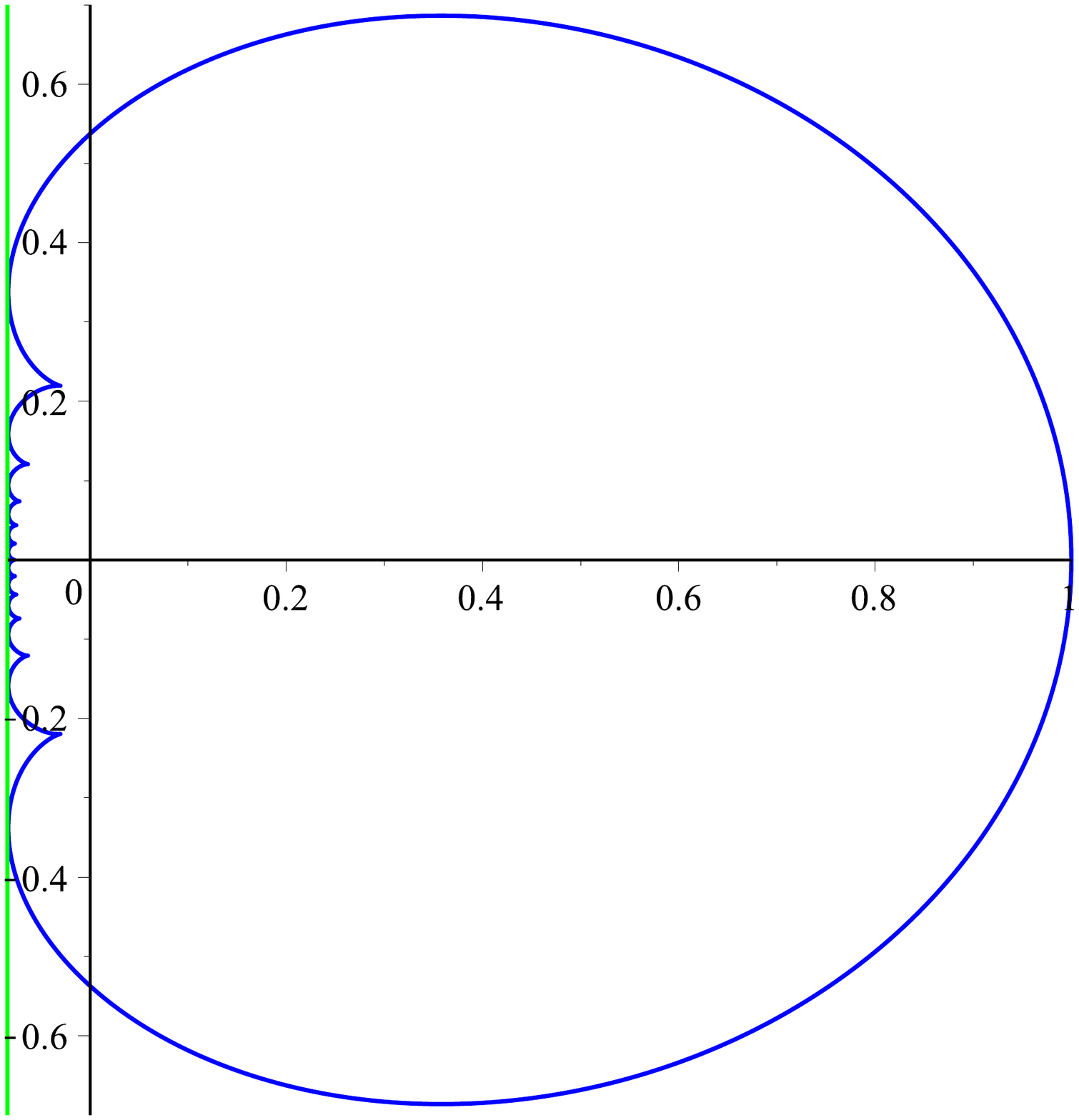}\hspace{1cm}
\includegraphics[scale=0.15]{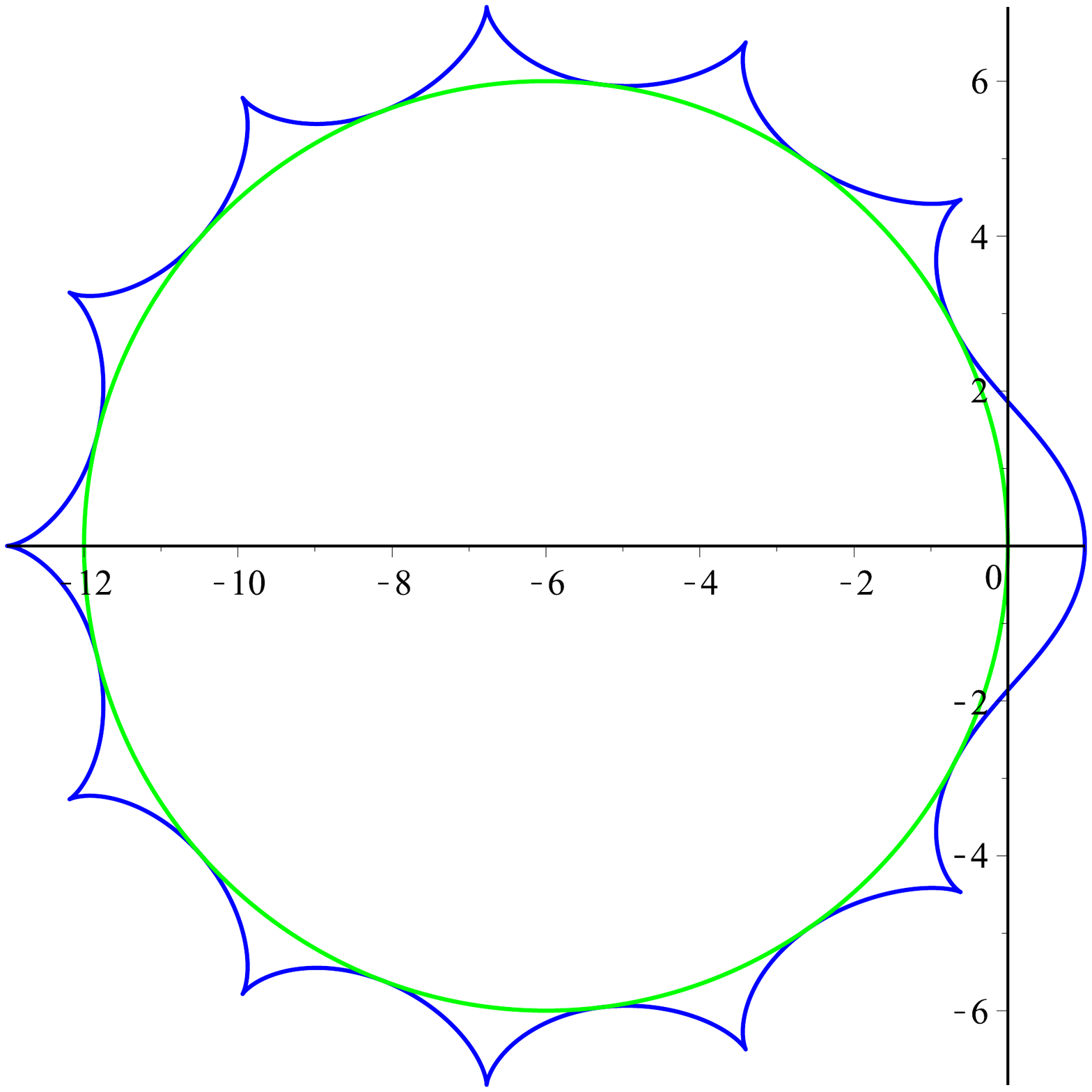}
}
\centerline{Fig. 8}


{\bf Conjecture A.} {\it The suggested  polynomial map $q$ has the lowest order among those $p:z\to p(z), p(0)=0, p(1)=1$ whose inverse image of a unit disc contains $M_R.$} \\

Now, the strength coefficients that are defined by the formulas
$$
\varepsilon_k= \frac2N\sum_{j=k+1}^N(1-\frac j{N+1}), k=1,...,N-1
$$ 
produce the non-linear control \eqref{nlc}.

As an example let us consider the Ikeda map  proposed first by Ikeda \cite{I} as a model of light going around across a non-linear optical resonator. It 2D version was considered in \cite{IDA} 
$$
x \to 1+0.9\left(x\cos\left(0.2-\frac 6{1+x^2+y^2} \right) - y\sin\left(0.2-\frac 6{1+x^2+y^2} \right) \right) 
$$
$$
y \to 0.9\left(x\sin\left(0.2-\frac 6{1+x^2+y^2} \right)  - y\cos\left(0.2-\frac 6{1+x^2+y^2} \right) \right)
$$
\bigskip

\centerline{
\includegraphics[scale=0.15]{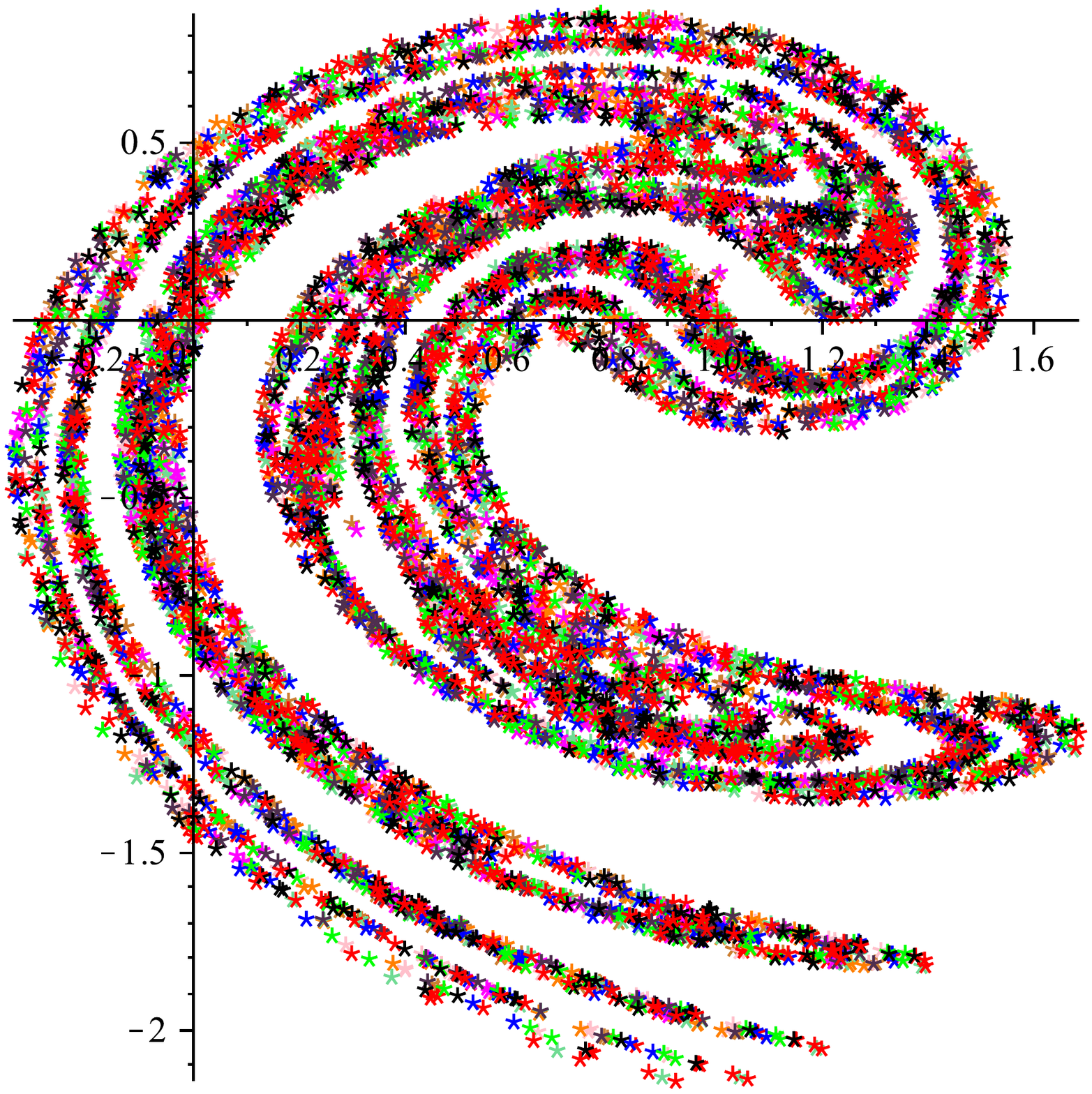}\hspace{1cm}
\includegraphics[scale=0.15]{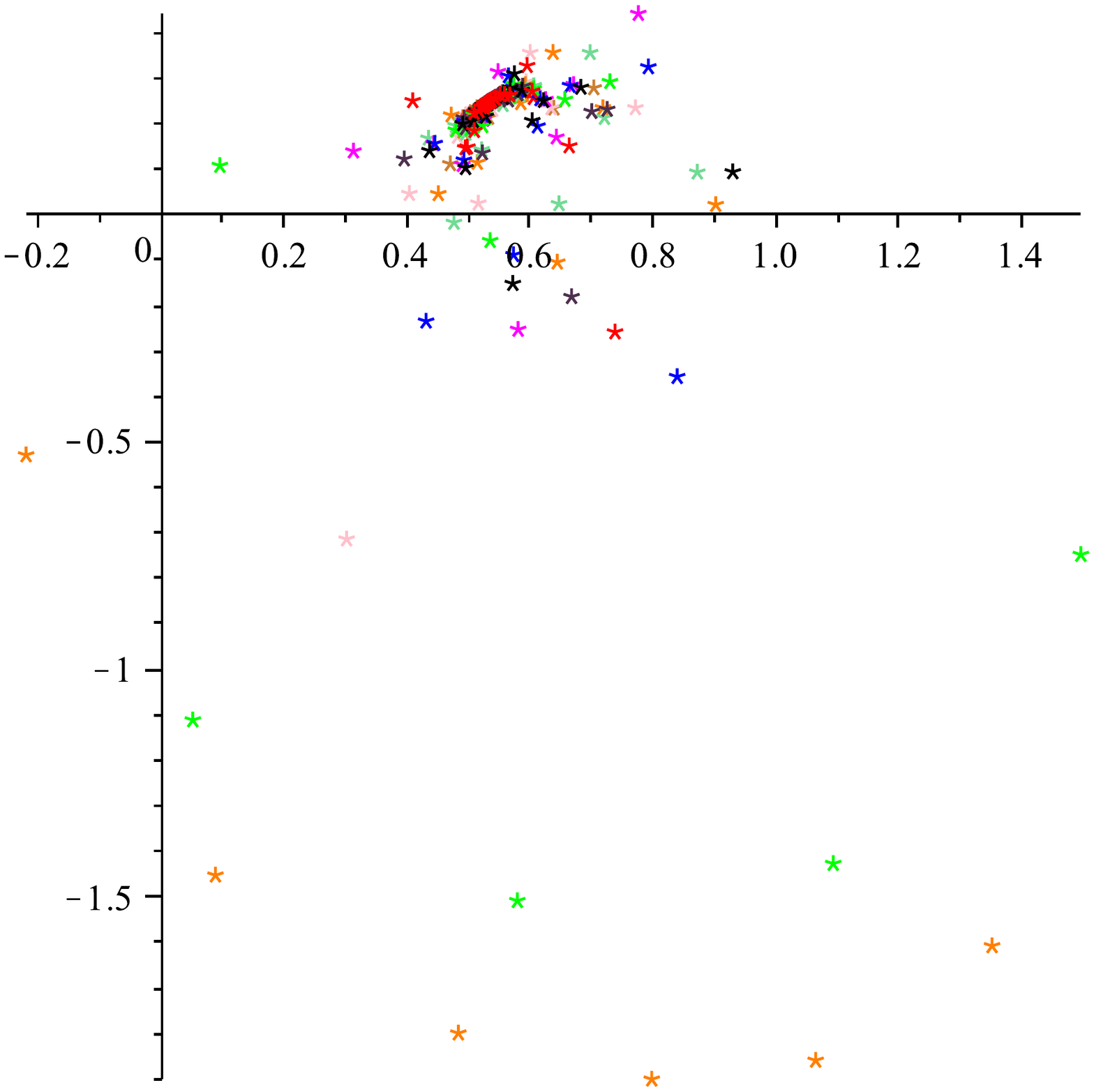}\hspace{1cm}
\includegraphics[scale=0.15]{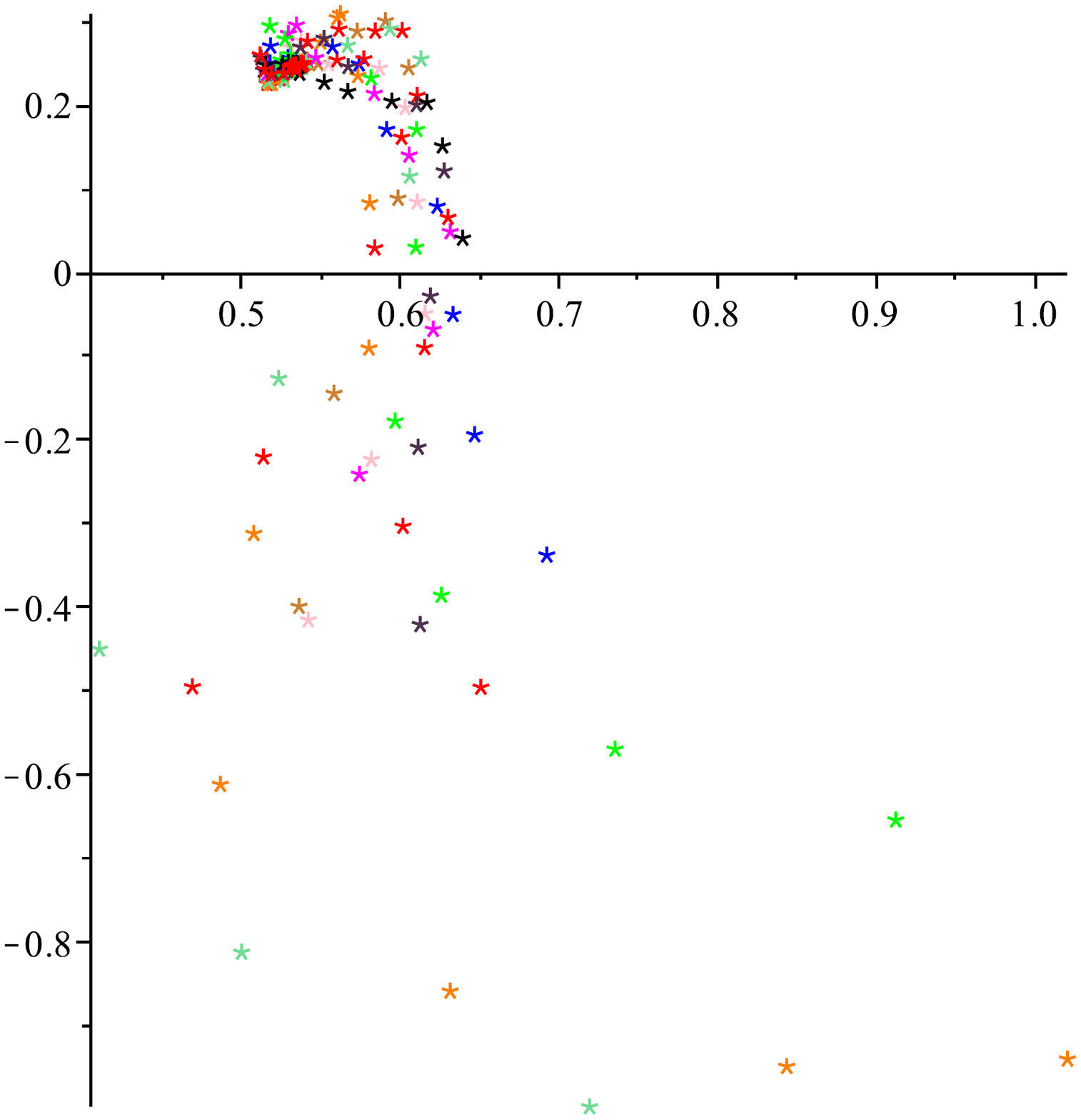}
}
\vspace{1cm}
\centerline{
Fig. 9 Ikeda chaos \hspace{1.5cm} Fig. 10 N=2 \hspace{1.5cm} Fig. 11 N=8
}

\centerline{
\includegraphics[scale=0.15]{{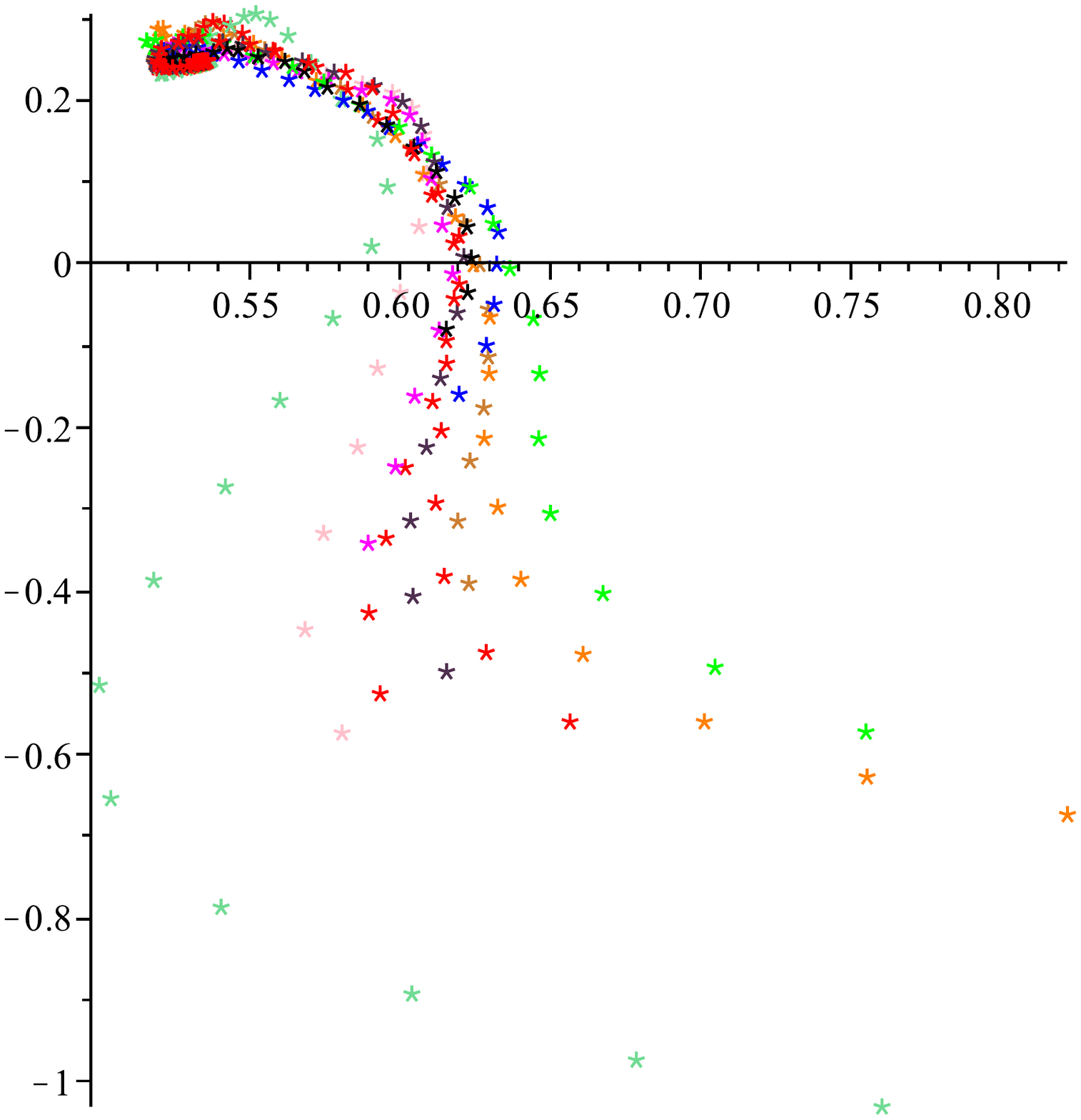}}\hspace{1.5cm}
\includegraphics[scale=0.15]{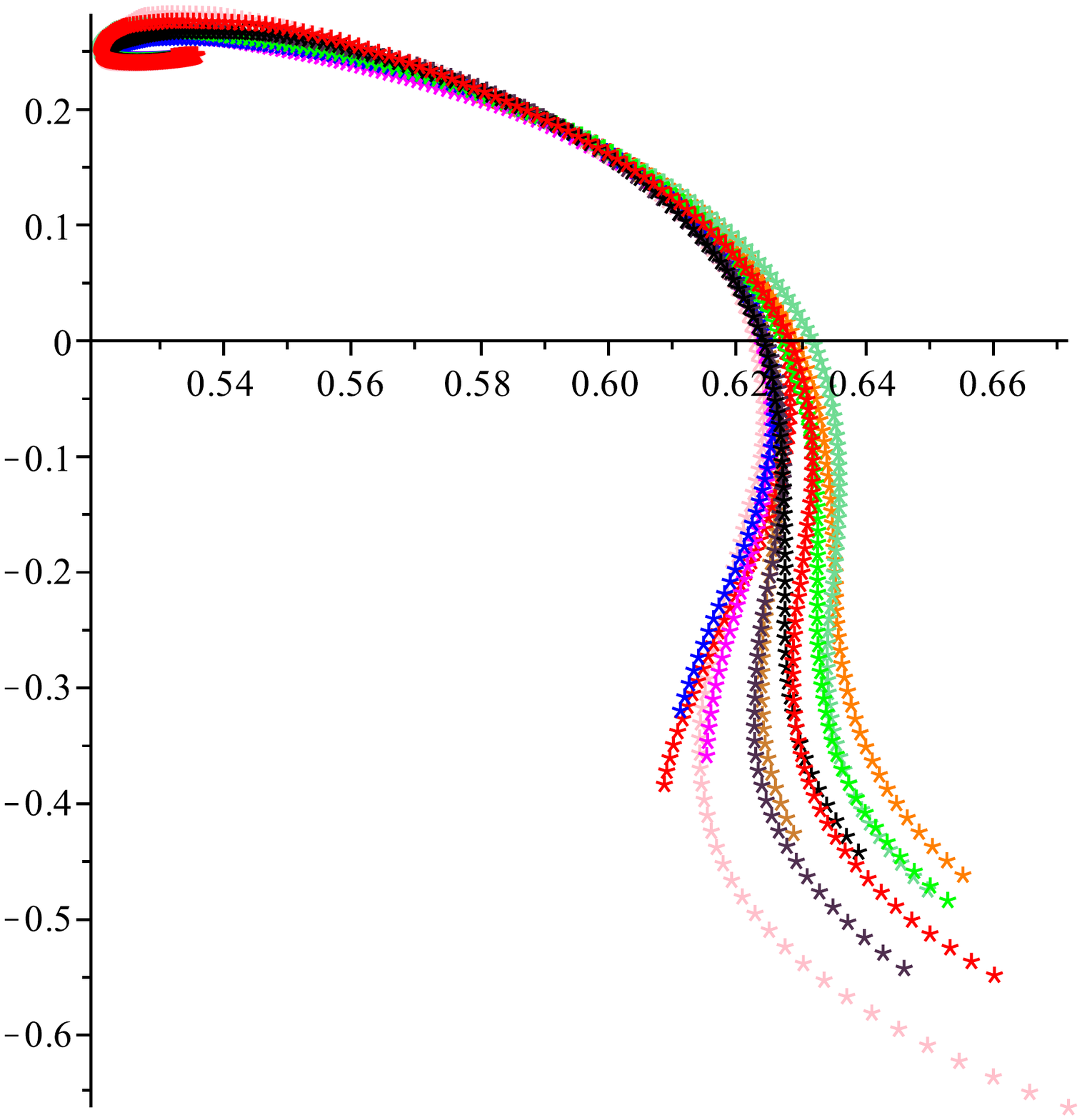}
}
\centerline{
Fig. 12 N=20 \hspace{1.5cm} Fig. 13 N=150 }

The displayed images suggests that any stabilization went through 5 basic stages: Chaos (Fig. 9, 14), 
Fluctuation (Fig. 10, 15), 
Separation (Fig. 11, 16), 
Concentration (Fig. 12, 17) 
and Stabilization (Fig. 13, 18). 

Another example is a famous Arnold cat map $(x, y) \to ( x + y , x+2y) mod1$ \cite{A} which is a classical case of Anosov diffeomorphism. 
 We come up with a remarkable discovery - even for a non-smooth mapping the addition of the control makes a close-loop system more structured. 

\centerline{
\includegraphics[scale=0.15]{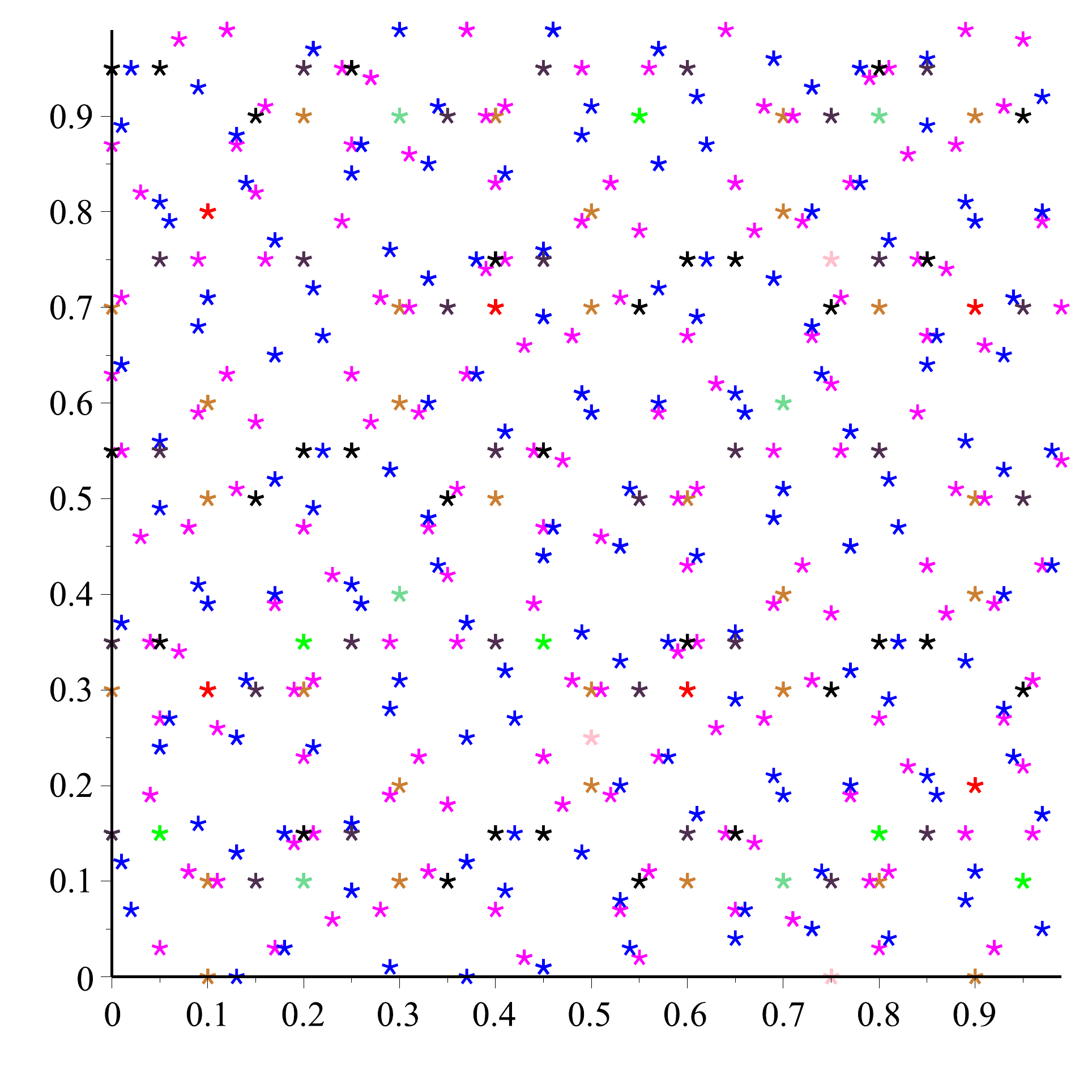}\hspace{1.5cm}
\includegraphics[scale=0.15]{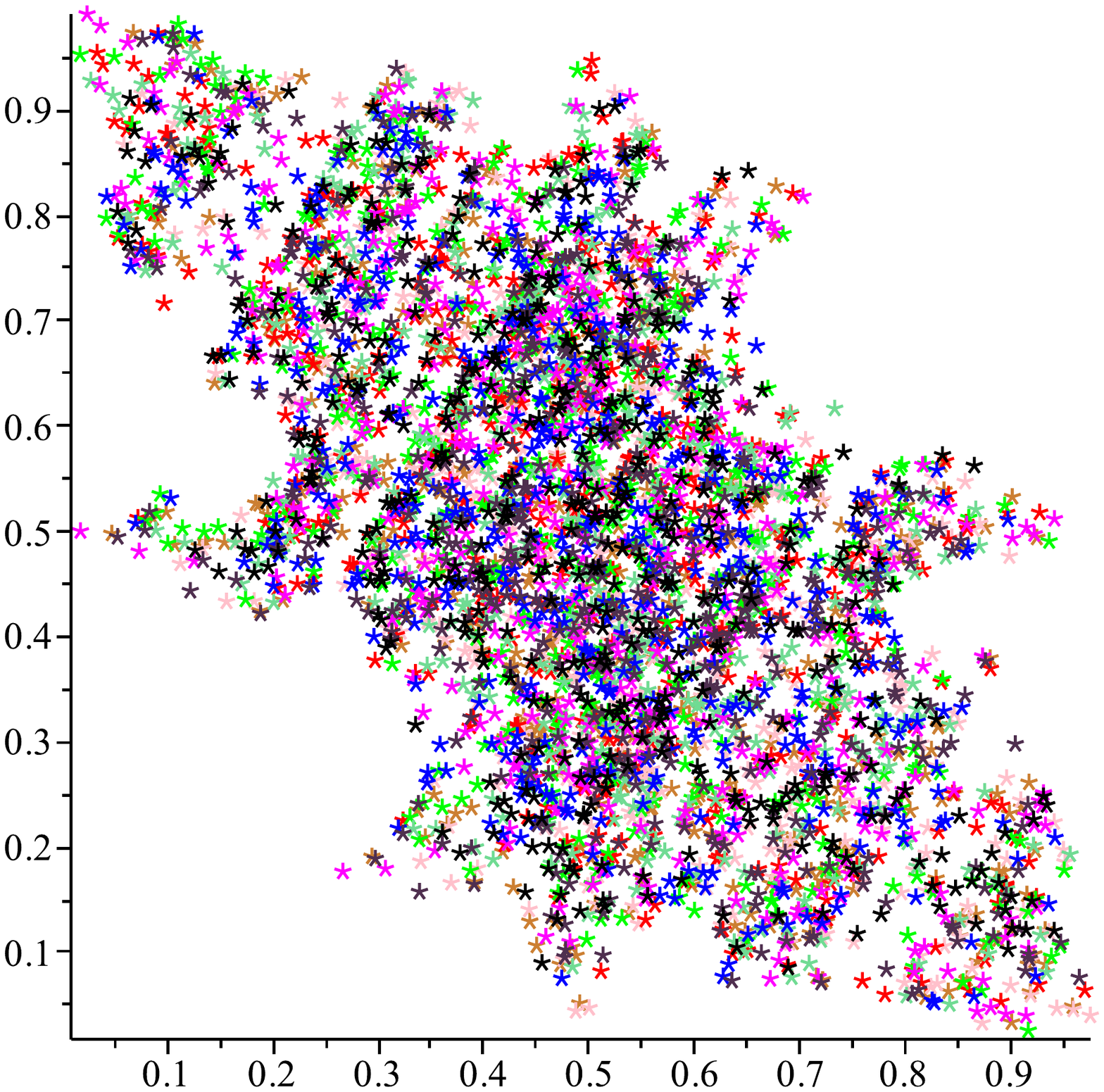}\hspace{1.5cm}
\includegraphics[scale=0.15]{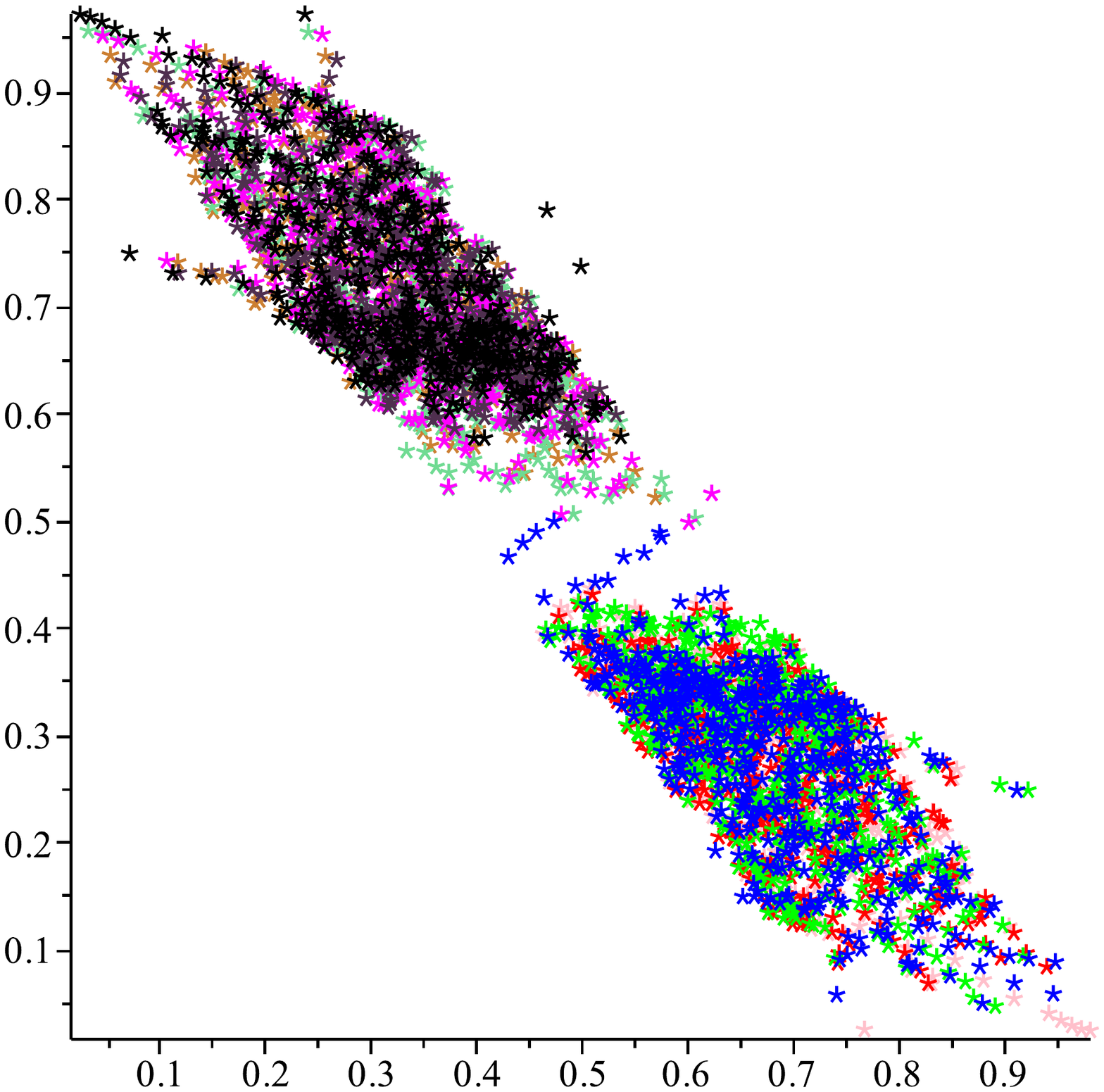}
}
\centerline{
Fig. 14 Arnold cat \hspace{1cm} Fig. 15 N=3 \hspace{2cm} Fig. 16 N=9
}

\centerline{
\includegraphics[scale=0.15]{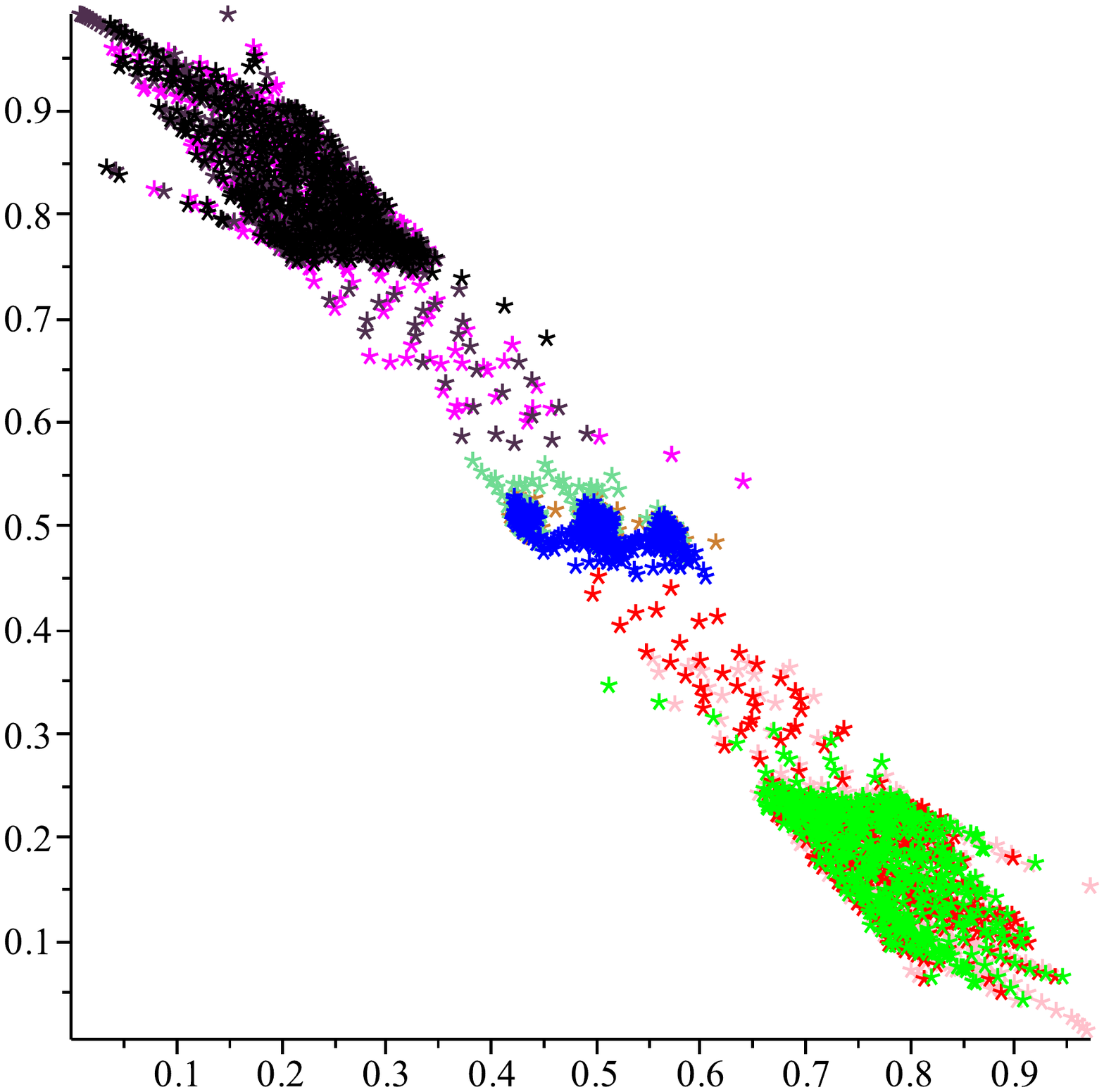}\hspace{2cm}
\includegraphics[scale=0.15]{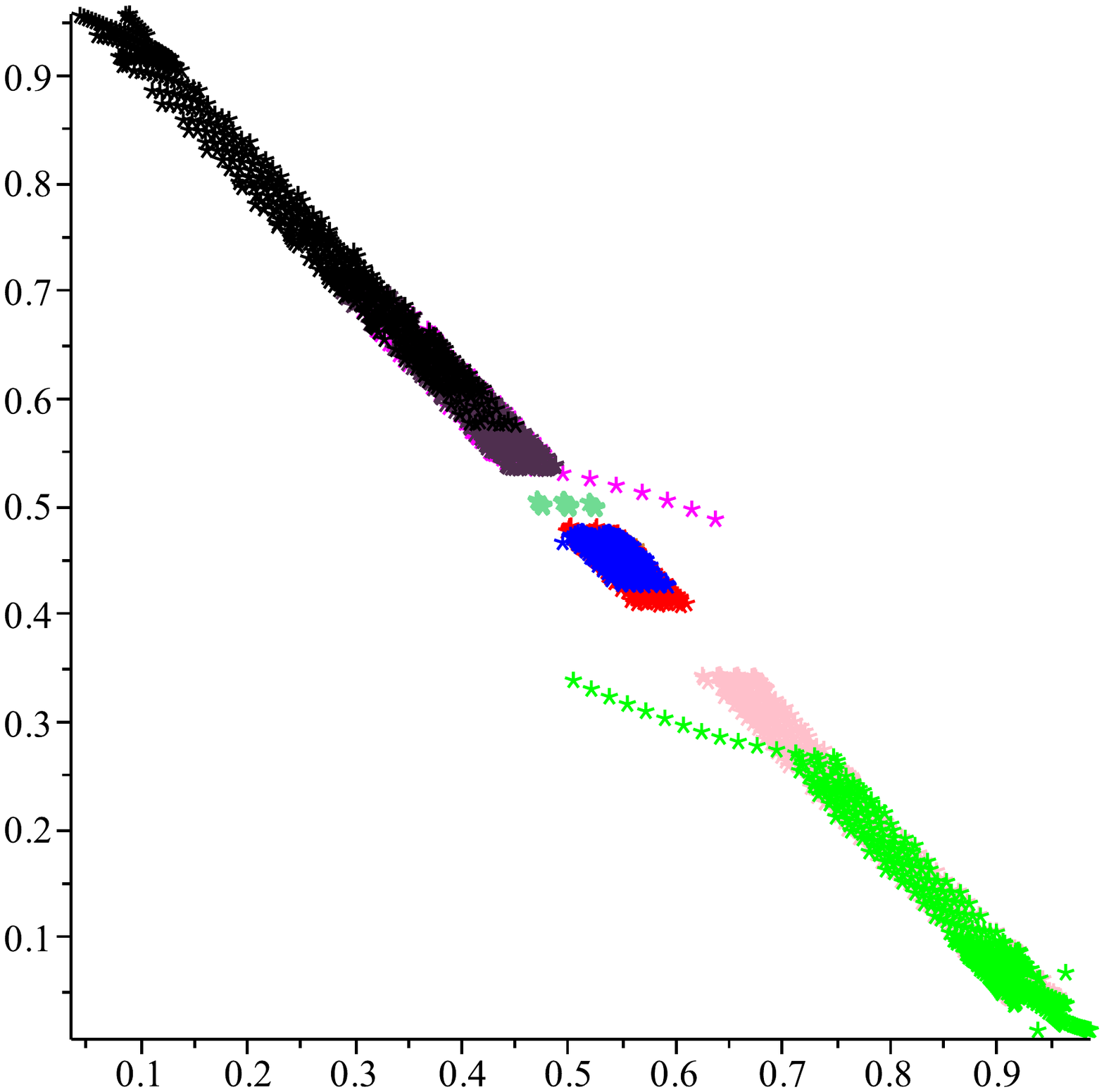}
}
5\vspace{1cm}
\centerline{
\hspace{1cm} Fig. 17 N=15 \hspace{2cm} Fig. 18 N=50 \hspace{1.5cm} 
}

Below are examples of the stabilization of 3D neural sine map\\

\centerline{
$x\to 12\sin(\pi(y-x)),$\\
$y\to 12\sin(\pi(z-y)),$\\
$z\to 12\sin(\pi(x-z))$
}

\begin{figure}[!htb]
\minipage{0.3\textwidth}
\includegraphics[scale=0.15]{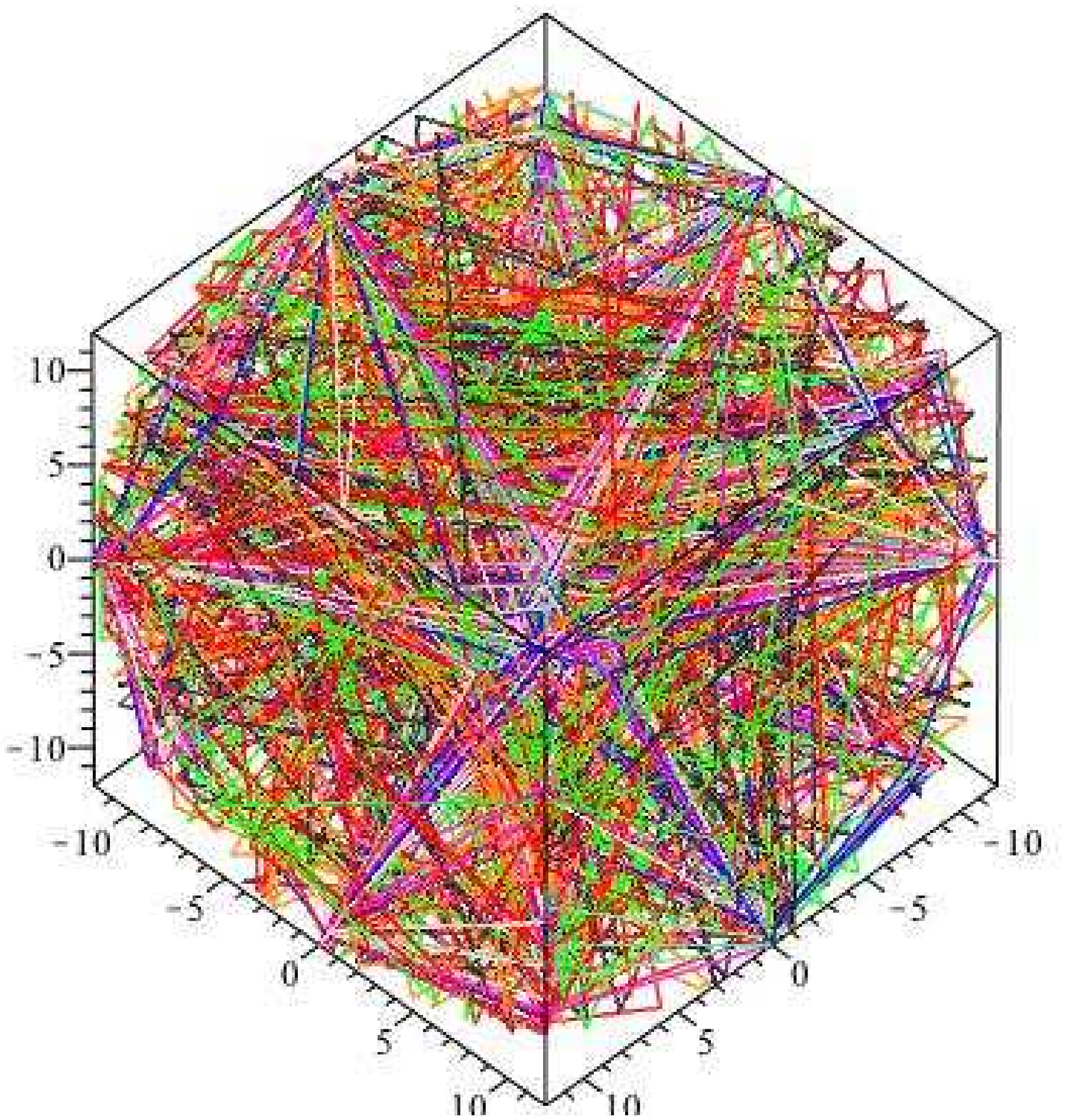}
\endminipage\hfill
\minipage{0.3\textwidth}
\includegraphics[scale=0.15]{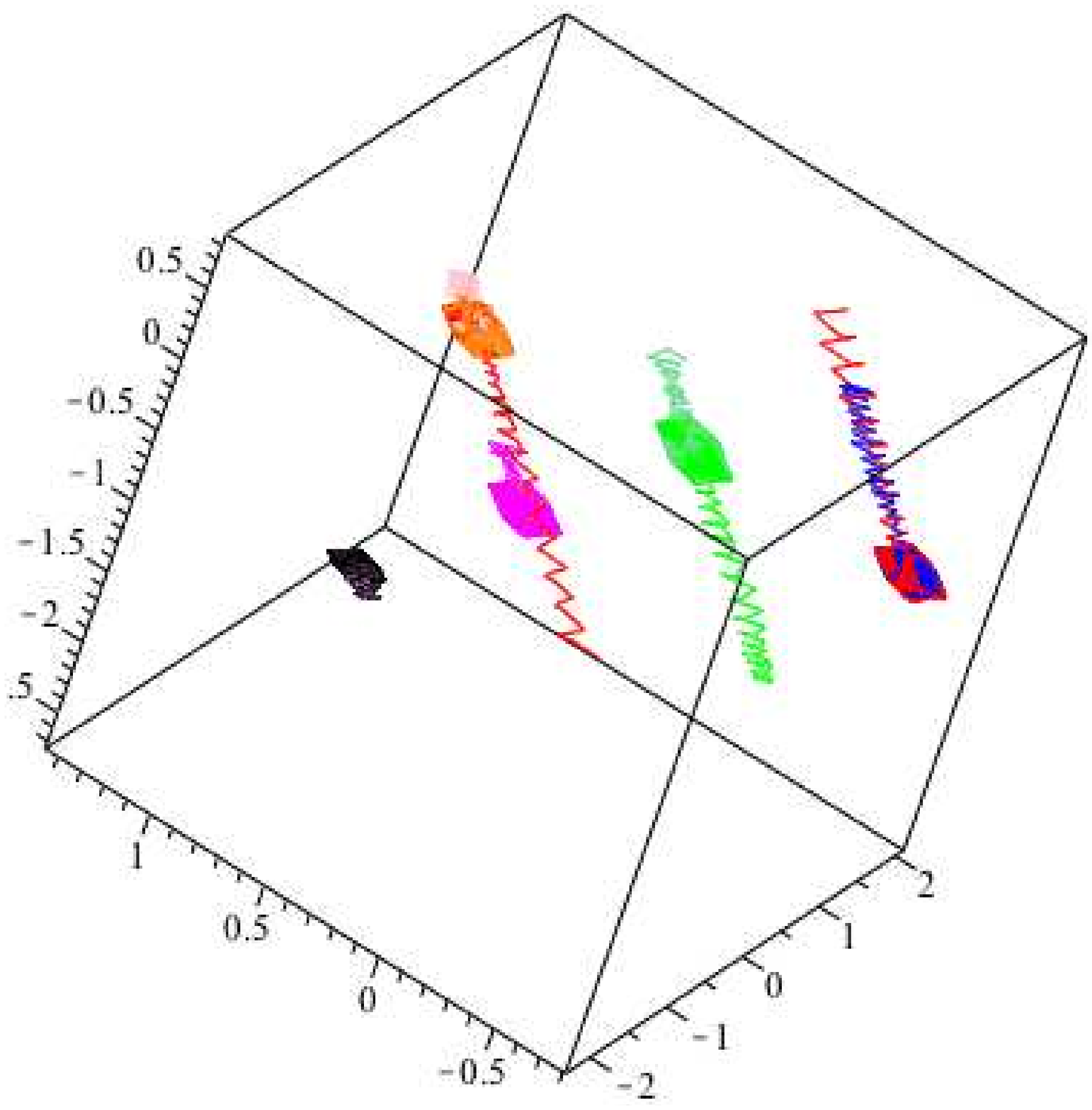}
\endminipage\hfill
\minipage{0.3\textwidth}%
\includegraphics[scale=0.15]{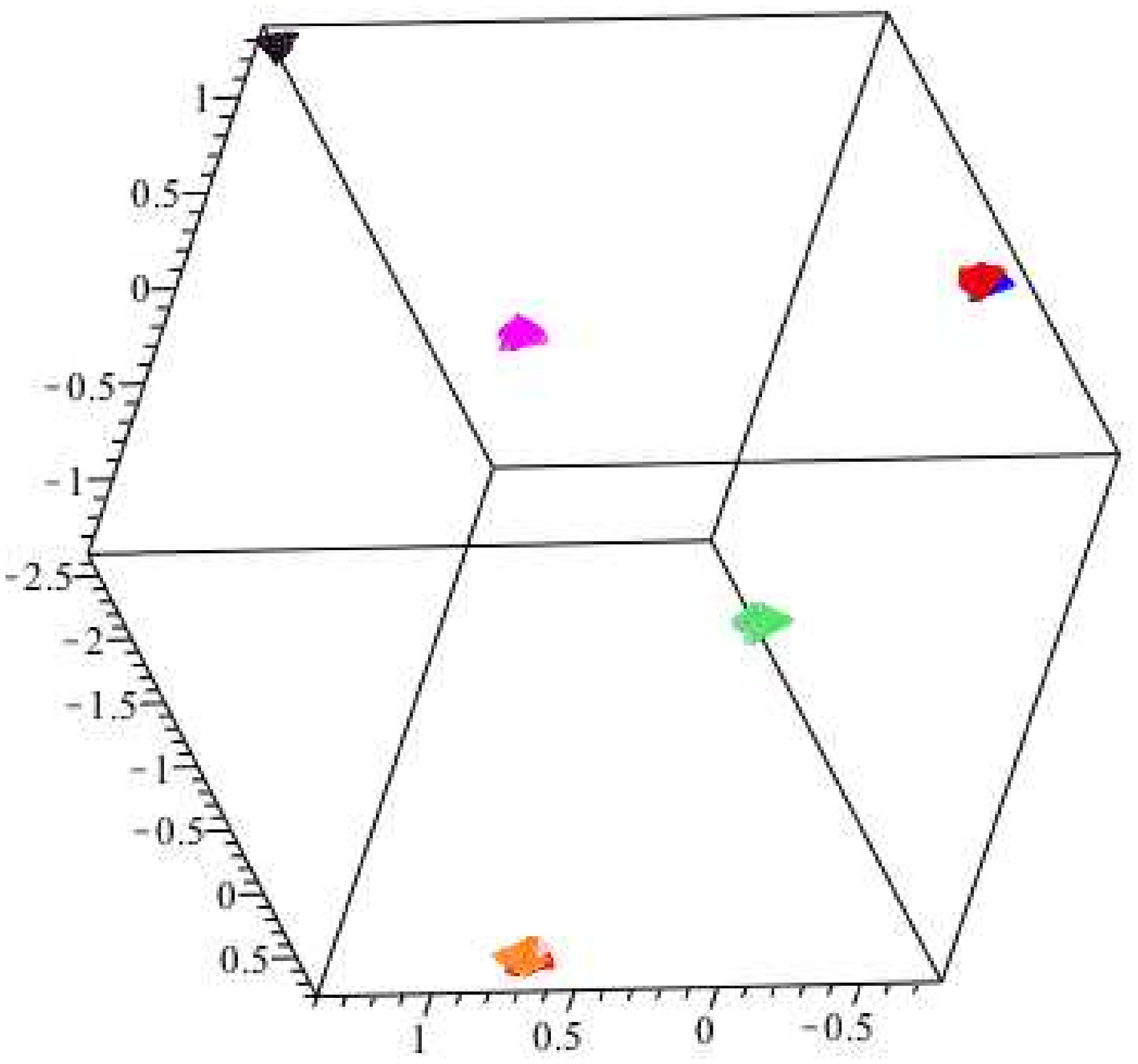}
\endminipage
\end{figure}
\bigskip\bigskip

\centerline{\bf Acknowledgement}
\medskip

 The author would like to express their deep gratitude to Alexei Solyanik for his generosity in supply of ideas and to Pietro Poggi-Corradini for useful discussions and interests to this article.

Address: D.Dmitishin, Odessa National Polytechnic University, 1 Shevchenko Ave., Odessa 65044, Ukraine.

E-mail:{\tt dmitrishin@opu.ua }

A.Khamitova and A.Stokolos, Georgia Southern University, Statesboro, GA 30460, USA

E-mail:{\tt astokolos@Georgiasouthern.edu,\\ \hspace*{1.4cm} Anna\_Khamitova@Georgiasouthern.edu } 

\end{document}